\newtheorem{theorem}{Theorem}[section]
\newtheorem{definition}[theorem]{Definition}
\newtheorem{proposition}[theorem]{Proposition}
\newtheorem{lemma}[theorem]{Lemma}
\newtheorem{example}[theorem]{Example}
\newtheorem{remark}[theorem]{Remark}
\newcommand{\Prov}{\mathsf{Prov}}
\newcommand{\Proof}{\mathsf{Proof}}
\title{Nonlinear Continuum of States and Intuitionistic Flows in a Cognitive Space}
\author{
  Faruk Alpay\footnote{Lightcap, Department of Future, \texttt{alpay@lightcap.ai}}\and
  Taylan Alpay\footnote{Aerospace Engineering, Turkish Aeronautical Association, \texttt{s220112602@stu.thk.edu.tr}}
}
\begin{document}
\maketitle

\sloppy

\begin{abstract}
We construct a rigorous mathematical framework for an abstract continuous evolution of an internal state, inspired by the intuitive notion of a flowing thought sequence. Using tools from topology, functional analysis, measure theory, and logic, we formalize an indefinitely proceeding sequence of states as a non‑linear continuum with rich structure. In our development, a trajectory of states is modeled as a continuous mapping on a topological state space (a potentially infinite‑dimensional Banach space) and is further conceptualized intuitionistically as a choice sequence not fixed in advance. We establish fundamental properties of these state flows, including existence and uniqueness of evolutions under certain continuity conditions (via a Banach fixed‑point argument), non‑measurability results (demonstrating the impossibility of assigning a classical measure to all subsets of the continuum of states), and logical semantic frameworks (defining a Tarskian truth definition for propositions about states). Throughout, we draw on ideas of Brouwer, Banach, Tarski, Poincaré, Hadamard, and others—blending intuitionistic perspective with classical analysis—to rigorously capture a continuous, ever‑evolving process of an abstract cognitive state without resorting to category‑theoretic notions. This manuscript is presented in formal \LaTeX{}, with definitions, lemmas, propositions, and proofs, as a self‑contained study of a mathematical model of a non‑linear stream of an internal state.
\end{abstract}

\section{Introduction}

In mathematical terms, one may view the evolving state of a thinking subject as a path in an appropriate state space. Our aim is to formalize this intuitive continuum of internal states using the language of pure mathematics. We consider an abstract space $M$ (the state space) whose elements represent possible configurations of this internal state at an instant. A priori, $M$ can be quite general—possibly an infinite‑dimensional space or even a class in some set‑theoretic universe (in the sense of Grothendieck) to accommodate a rich collection of states. The evolution of the state will be described by a family of mappings or a flow on $M$.

We seek a framework that captures the continuous, seemingly unbroken progression of these states, while allowing for complex, non‑linear transitions. Classical real time will index the evolution, but we will also investigate an intuitionistic treatment wherein time and the state sequence are constructed incrementally, never as a completed whole. Our development will therefore proceed on two parallel tracks:
\begin{itemize}
  \item \emph{Topological and analytical track}: We treat $M$ as a topological (and often metric) space, possibly a Banach space, and define continuous trajectories $x:\mathbb{R}\to M$ that represent the flow of states over (real) time. Within this track, we apply tools such as the Banach fixed‑point theorem to guarantee existence and uniqueness of certain trajectories, and explore measure‑theoretic properties (or obstructions) of these flows.
  \item \emph{Intuitionistic and logical track}: We also formulate the evolution as a choice sequence (in the sense of Brouwer) of states, emphasizing that the sequence is not predetermined by any finished law, embodying the idea of a free progression of thought. We connect this with a formal semantic perspective: using a logical language to describe properties of states and employing Tarski’s semantic theory of truth to interpret statements about the evolving state.
\end{itemize}

Although this manuscript does not claim to supplant existing unified frameworks for modelling cognitive or dynamical processes, a brief comparison is instructive.  Some recent approaches, such as categorical or information‑geometric theories of cognition, aim to provide a single mathematical formalism encompassing diverse kinds of dynamics (for instance, discrete and continuous updates, or interactions across multiple scales).  These often employ tools like sheaf theory, higher‑order category theory or information geometry.  By contrast, our framework deliberately synthesises classical topological analysis, measure theory and intuitionistic logic without appealing to such higher abstractions.  Rather than unifying different paradigms, we sought to formalise an abstract notion of a continuously unfolding state within a well‑known analytical setting.  A key limitation of our earlier exposition was the absence of a worked instantiation illustrating how the general definitions operate in practice; this has been remedied by the inclusion of Example~\ref{ex:circle-rotation}, which demonstrates how recurrence and ergodicity manifest in a concrete circle rotation.  We also stress that assumptions like the existence of a finite invariant measure, crucial in Theorem~\ref{thm:recurrence}, are not consequences of the basic setup but must be imposed externally, much as in classical ergodic theory.

Finally, we recognise that the history of mathematical thought spans many cultures.  A notable example comes from Chinese mathematics: Liu~Hui’s third‑century commentary on the \emph{Nine Chapters on the Mathematical Art} not only provided recipes for solving practical problems but, more importantly, advanced a more mathematical mode of explanation.  His commentary supplied underlying principles for the rules, investigated the accuracy of the approximations, and even gave early evidence of ideas connected with differential and integral calculus.  Such a shift from mere prescriptions to explanatory analysis prefigures the rigorous approach we adopt here.

In the twentieth century, Chinese mathematics produced further major contributions to the global discipline.  Hua~Luogeng (Hua~Loo–Keng) became one of the leading mathematicians of his era and a central figure in modern Chinese mathematics.  His papers on number theory—especially on Waring’s problem and exponential sums—are regarded as an index of the subject’s major activities in the first half of the twentieth century.  Hua’s instinct for important problems, his powerful techniques, and his leadership over five decades helped to cultivate a thriving mathematical community in China.  These and other cross‑cultural contributions emphasise that the methods deployed throughout this paper are part of a long tradition of analytical thought spanning many times and places.

By integrating these approaches, we obtain a precise but abstract model of a continuously evolving state that reflects the intuitive notion of a self‑unfolding, perhaps non‑measurable, stream of mental content.  Our aim is not to replace categorical unification schemes but to provide an alternative viewpoint that does not invoke higher‑categorical machinery.  Instead we work with set‑theoretic, topological, analytical and logical formalisms, albeit guided by the structural insights championed by Grothendieck and Mac Lane.

\subsection*{Related influences}

The general idea of an internal state continuum has philosophical roots: Brouwer’s intuitionism considers the continuum (the ``intuitionistic continuum'') as a primitive given through the ``two‑oneness'' of time and the creative act of the mind. Indeed, Brouwer introduced the notion of a Creating Subject, an idealized mind constructing mathematical objects in time, particularly via choice sequences. This perspective inspires our intuitionistic formulation of the state trajectory. On the analytical side, Poincaré’s work on dynamical systems and recurrences hints at how a deterministic system can exhibit perpetual novelty or eventual repetition; for instance, the Poincaré recurrence theorem guarantees that certain flows return arbitrarily close to past states. Banach’s contributions to functional analysis provide the fixed‑point principle that undergirds the existence of solutions to evolution equations, and Tarski’s work in logic provides tools to discuss the truth of statements about states in a rigorous way. Additionally, the surprising result of Banach and Tarski on the existence of non‑measurable sets serves as a caution when attempting to assign a measure or ``volume'' to parts of the continuum of states—suggesting that a classical measure theory may not fully capture the ``size'' of fragments of an internal continuum if we assume maximal set‑theoretic freedom (Choice). Finally, we note Hadamard’s early study of chaotic geodesic flows on surfaces of negative curvature, illustrating that even simple deterministic rules can produce highly intricate, non‑repeating trajectories; this informs our view that the evolution of states might be highly non‑linear or sensitive to initial conditions, despite being continuous.

The paper is organized as follows. Section~\ref{sec:state} introduces the state space $M$ and its basic structures (topology, metric, measure) and defines what we mean by a continuous state trajectory or flow, including a fixed‑point existence result (Proposition~\ref{prop:banach}). Section~\ref{sec:intuitionistic} develops an intuitionistic perspective, formalizing the state continuum as a choice sequence and reconciling this with the classical model. Section~\ref{sec:logic} builds a logical semantic framework over the evolving state. We also examine measure‑theoretic aspects: Proposition~\ref{prop:nonmeas} shows that, under the axiom of choice, non‑measurable subsets of the state continuum exist. Section~\ref{sec:dynamics} studies dynamical consequences, including recurrence, stability, and chaos. We conclude with a reflection on the scope and limits of this formalism.
Throughout, all mathematical arguments are given with full rigor. We strive to keep the presentation self‑contained; definitions and preliminary results are provided as needed. The style is that of pure mathematics – our intent is that a reader versed in topology, analysis, and logic can appreciate the results without any prior knowledge of the motivating psychological concept, yet those familiar with the concept will recognize its formal reflection in the theorems and constructions that follow.

\section{State space and continuous flows}\label{sec:state}

\subsection{The state space \texorpdfstring{$M$}{M}}

Let $M$ be a nonempty set. We think of $M$ abstractly as the ``space of all possible internal states'' of our subject, but mathematically $M$ will be treated as a set equipped with additional structure. Specifically, we assume:
\begin{itemize}
  \item \emph{Topological structure}: $M$ is endowed with a topology $\tau$, making $(M,\tau)$ a topological space. We do not assume $M$ is necessarily metrizable or Hausdorff at the start, to allow generality. However, in many examples one may consider $M$ to be a metric space (even a Polish space or a Banach space under some metric $d$) for convenience. When needed, we will assume metric structure. For most of the general theory, $M$ need only be a topological space satisfying appropriate separation axioms so that usual theorems apply.
  \item \emph{Algebraic or linear structure (optional)}: In some instances, we will treat $M$ as a vector space over $\mathbb{R}$ (or $\mathbb{C}$), and in particular as a normed space (Banach space when complete). This will allow us to use analytic tools like differentiation of state trajectories and application of Banach’s fixed‑point theorem.
  \item \emph{Measure structure (optional)}: We may consider a $\sigma$‑algebra $\mathcal{F}$ of subsets of $M$ and a measure $\mu:\mathcal{F}\to [0,+\infty]$ to discuss measurable events or quantities related to states. However, we will show that if we try to take $\mathcal{F}$ to be the power set $2^M$, serious obstructions arise (in fact, under the axiom of choice, typically no finitely additive measure can assign reasonable ``volumes'' to all subsets of a rich continuum).
\end{itemize}

We will often refer to elements of $M$ simply as \emph{states}. No further structure is assumed on $M$ a priori, though later we will introduce additional axioms or properties (like compactness, connectedness, completeness, etc.) as needed for specific results.

For set‑theoretic safety (to avoid Russell‑style paradoxes when considering ``the set of all sets'' or similar large collections), we mention without detail that one can work inside a Grothendieck universe $U$ that contains $M$ and is closed under the usual set operations. In practice, this means any power set or product we form from elements of $U$ is also an element of $U$. We shall not dwell on this set‑theoretic foundation, but it allows us, for example, to consider the set $M^{\mathbb{R}}$ of all functions from $\mathbb{R}$ to $M$ as a legitimate set (element of a larger universe) rather than a proper class.

\subsection{Continuous trajectories}

We formalize the evolution of the state as a mapping $x:T\to M$ where $T$ is a time index set. We will usually take $T=\mathbb{R}$ (interpreted as physical time, which we assume to be continuous), or a subset like $[0,1]$ or $\mathbb{R}_{\ge 0}$ for an initial value problem.

\begin{definition}[Trajectory]\label{def:trajectory}
A \emph{state trajectory} (or \emph{flow path}) is a function $x:\mathbb{R}\to M$ (with $t\mapsto x(t)$) which is assumed to be continuous when $\mathbb{R}$ is given its standard topology and $M$ has the topology $\tau$. Equivalently, for every open set $U\in \tau$, the preimage $x^{-1}(U)\subset \mathbb{R}$ is open in $\mathbb{R}$. If in addition $M$ is equipped with a metric $d$, we further require that $x:\mathbb{R}\to (M,d)$ is a continuous (and often differentiable) map in the metric sense.
\end{definition}

We sometimes call such an $x(t)$ a \emph{flow}, thinking of $x$ as describing the motion of a point in the space $M$. In classical dynamical systems terms, one might have a family of maps $\{\Phi^t:M\to M\}_{t\in\mathbb{R}}$ forming a continuous flow (i.e.\ $\Phi^0=\mathrm{Id}_M$, $\Phi^{t+s}=\Phi^t\circ \Phi^s$). In that case, $x(t)=\Phi^t(x(0))$ is a trajectory obeying the semi‑group property. We do not assume the existence of a global flow $\Phi^t$ a priori; instead, we often construct trajectories directly via differential or integral equations.

\begin{example}[Trivial trajectory]\label{ex:constant}
A constant map $x(t)=s_0$ for all $t$, where $s_0\in M$ is fixed, is a valid trajectory. This represents a steady state or unchanging state over time. While trivial, such constant solutions will play a role as equilibrium solutions when we study dynamical equations. Non‑trivial trajectories, of course, are of greater interest as they model changing states.
\end{example}
\begin{remark}
In the recurrence theorem and the preceding example we worked with integer iterates of the flow.  This is sufficient to obtain recurrence for almost every initial point, because the set of integer times is unbounded and the measure is invariant under the flow.  If one wishes to obtain recurrence along the full continuum of times, one can use the fact that the flow $\Phi^t$ is continuous in $t$ and that the set of return times is dense: once a return occurs at some integer time $n$, nearby real times yield states arbitrarily close to the starting point.  Classical arguments in ergodic theory (using Birkhoff’s pointwise ergodic theorem and conservativity) show that for measure‑preserving, conservative flows, recurrence along real times holds for almost every point.  We do not pursue these refinements here, as the integer‑time version suffices to illustrate the phenomenon.
\end{remark}
\begin{example}[Dissipative logistic map on a compact attractor]\label{ex:dissipative}
As a second worked example illustrating sensitive dependence on initial conditions, consider the discrete‑time dynamical system defined by the logistic map.  Let $M=[0,1]$ and define $T:M\to M$ by
\[
  T(x) = 4x(1-x).
\]
  The interval $[0,1]$ is forward invariant under $T$ and serves as a compact attractor: iterates of $T$ push almost all initial points into a chaotic invariant subset of $[0,1]$.  The map $T$ is continuous but not invertible, and it expands distances on average.  In fact, given any $\delta>0$ there exist points $x,y\in[0,1]$ arbitrarily close (with $|x-y|<\delta$) and an iterate $n$ such that $|T^n(x)-T^n(y)|>\tfrac{1}{2}$.  This is one manifestation of \emph{sensitive dependence on initial conditions}.

  Beyond this qualitative description one can give a precise statistical picture.  There exists an absolutely continuous invariant probability measure (acim) $\mu$ for $T$ whose density is 
  \[ h(x)=\tfrac{1}{\pi\sqrt{x(1-x)}} \]
  on $(0,1)$.  One can verify this formula by analysing the Perron–Frobenius transfer operator 
  \[ [\mathcal{L}\varphi](x) = \sum_{y:T(y)=x} \frac{\varphi(y)}{|T'(y)|}. \]
  For the logistic map $T(x)=4x(1-x)$ each $x\in(0,1)$ has two preimages $y_{\pm}(x)=\tfrac{1\pm\sqrt{1-x}}{2}$, and a direct computation shows that $h$ satisfies the fixed‑point equation $\mathcal{L}h = h$: one checks that 
  $h(y_{+}(x)) + h(y_{-}(x)) = 2 h(x)$ and that $|T'(y_{\pm}(x))| = 2\sqrt{1-x}$, so the Jacobian factors cancel and $[\mathcal{L}h](x)=h(x)$.  By standard Perron–Frobenius theory (see Chapter~IV of de~Melo and van~Strien’s “One‑Dimensional Dynamics”) this implies that $h$ is an invariant density for $T$, and a spectral gap argument shows that it is unique among integrable densities.

  With respect to this acim the map is not only ergodic but also mixing: if $\varphi,\psi:[0,1]\to \mathbb{R}$ are Hölder continuous functions (with respect to the usual Euclidean metric) of exponent $\beta\in(0,1]$, then there exist constants $C>0$ and $\rho\in(0,1)$ such that the correlations
  \[
    \left|\int \varphi\circ T^n\cdot \psi\,d\mu \; - \; \int \varphi\,d\mu\int \psi\,d\mu\right| \le C\,\rho^n\,\|\varphi\|_{C^{\beta}}\,\|\psi\|_{C^{\beta}}
  \]
  decay exponentially in $n$.  Here $\|\varphi\|_{C^{\beta}}$ denotes the Hölder norm
  \[
    \|\varphi\|_{C^{\beta}} = \sup_{x\neq y}\frac{|\varphi(x)-\varphi(y)|}{|x-y|^{\beta}} + \sup_{x} |\varphi(x)|.
  \]
  These spectral estimates for the transfer operator provide a rigorous justification for the claim that correlations of Hölder observables decay exponentially fast.

  Thus the logistic map supplies a non‑trivial, dissipative example within our framework.  It contrasts with the conservative rotation of Example~\ref{ex:circle-rotation} and provides a concrete setting for the chaotic behaviour discussed in Proposition~\ref{prop:chaos}.  The presence of an acim and mixing ensures that time averages converge to space averages for a broad class of observables, thereby illustrating how measure‑theoretic phenomena can be accommodated within our topological–analytical model.
  \paragraph{Coordinate calculation.}  To illustrate explicitly that the gradient flow of a convex divergence does not depend on the choice of \(\alpha\)–connection, write a point in the simplex as $p=(p_1,\dots,p_n)$ with $p_i>0$ and $\sum_{i=1}^n p_i=1$.  The Fisher metric in these affine coordinates is $g_{ij}(p)=\delta_{ij}/p_i$.  For the Kullback--Leibler divergence
  \[
    D(p\Vert q)=\sum_{i=1}^n p_i\,\log\frac{p_i}{q_i}
  \]
  from a fixed reference distribution $q=(q_1,\dots,q_n)$, the gradient with respect to this metric has components
  \[
    (\nabla D)_i=\sum_{j=1}^n g^{ij}(p)\,\frac{\partial D}{\partial p_j}=p_i\Bigl(1+\log\tfrac{p_i}{q_i}\Bigr)-p_i\sum_{k=1}^n p_k\Bigl(1+\log\tfrac{p_k}{q_k}\Bigr).
  \]
  Inverting the metric multiplies by $p_i$ and the second term subtracts the component normal to the simplex, enforcing $\sum_i p_i=1$.  Importantly, this expression is determined entirely by the divergence $D$ and the Fisher metric; it does not involve the Christoffel symbols of any \(\alpha\)–connection.  In other words, we are computing the \emph{metric gradient} of $D$ by raising indices with the Fisher metric rather than solving a connection‑dependent geodesic equation.  The various \(\alpha\)–connections on $\Delta^{n-1}$ agree in their Levi–Civita part and differ only by torsion terms, which do not enter into the metric gradient.  Thus the gradient vector field and its flow are the same regardless of \(\alpha\).  This calculation illustrates the claim that for any Bregman divergence (including Kullback–Leibler) the gradient flow with respect to the Fisher metric is \(\alpha\)–independent, whereas geodesic or parallel‑transport‑based dynamics would retain \(\alpha\)–dependence.

\begin{proposition}[\(\alpha\)–independence of Fisher–metric gradient flows]\label{prop:alpha-independence}
Let $D$ be a convex Bregman divergence on the probability simplex $\Delta^{n-1}$.  There exists a strictly convex, differentiable potential $\varphi:(0,1)^n\to\mathbb{R}$ such that
\[D(p\Vert q) = \varphi(p) - \varphi(q) - \langle \nabla\varphi(q), p-q \rangle \quad\text{for all }p,q\in\Delta^{n-1}.\]
Denote by $g$ the Fisher–Rao metric on $\Delta^{n-1}$ and by $\nabla^{(\alpha)}$ the one‑parameter family of $\alpha$–connections.  Then the gradient vector field $\nabla^g D$ of $D$ with respect to $g$ is independent of $\alpha$, and hence the corresponding gradient flow is the same for all $\alpha\in\mathbb{R}$.

\begin{proof}
There are two complementary ways to establish this independence, one coordinate‑based and the other intrinsic.

\emph{Coordinate proof.}  Write a point in the simplex as $p=(p_1,\dots,p_n)$ with $p_i>0$ and $\sum_i p_i=1$.  The Fisher metric in these coordinates is $g_{ij}(p)=\delta_{ij}/p_i$, and its inverse is $g^{ij}(p)=p_i\delta_{ij}-p_ip_j$.  Differentiating $D$ with respect to $p_j$ yields
\[\frac{\partial D}{\partial p_j}(p\Vert q) = \frac{\partial \varphi}{\partial p_j}(p) - \frac{\partial \varphi}{\partial p_j}(q).\]
Raising the index with $g^{ij}(p)$ gives the $i$th component of the gradient:
\[(\nabla^g D)_i(p) = \sum_{j=1}^n g^{ij}(p)\,\frac{\partial D}{\partial p_j}(p\Vert q).
\]
Substituting $g^{ij}(p)$ and simplifying leads to
\[(\nabla^g D)_i(p) = p_i\Bigl(\frac{\partial \varphi}{\partial p_i}(p) - \frac{\partial \varphi}{\partial p_i}(q)\Bigr) - p_i\sum_{k=1}^n p_k\Bigl(\frac{\partial \varphi}{\partial p_k}(p) - \frac{\partial \varphi}{\partial p_k}(q)\Bigr).
\]
The second term subtracts the component in the direction $(1,\dots,1)$, ensuring that the resulting vector lies in the tangent space of the simplex.  This formula depends only on the potential $\varphi$ and the Fisher metric and makes no reference to the Christoffel symbols of any $\alpha$–connection.  Since all $\alpha$–connections share the same Levi–Civita part and differ only by torsion, and torsion does not affect the metric gradient, the vector field $\nabla^g D$ is the same for all $\alpha$.  Consequently, the gradient flow generated by $\nabla^g D$ is independent of $\alpha$.

\emph{Intrinsic argument.}  The probability simplex equipped with the Fisher–Rao metric is a dually flat manifold: there exist dual affine coordinates (often called exponential and mixture coordinates) for which the Levi–Civita connection is flat and the metric is the Hessian of a convex potential.  Any convex Bregman divergence $D$ on $\Delta^{n-1}$ can be expressed as the difference of such a potential evaluated at two points.  The Riemannian gradient $\nabla^g D$ is defined as the unique vector field satisfying $g(\nabla^g D,\cdot)=dD$, where $dD$ is the differential of $D$ regarded as a one‑form.  Because the torsion of an $\alpha$–connection does not appear in either the metric $g$ or the differential $dD$, it follows that $\nabla^g D$ depends only on the Levi–Civita part of the connection, not on the particular choice of $\alpha$.  More abstractly, on any dually flat manifold $(\mathcal{M},g,\nabla^{(\alpha)})$ the covariant derivative of a gradient field depends only on the symmetric part of the connection.  Thus, provided $D$ is $C^2$ in a neighbourhood of each point and $\varphi$ is strictly convex so that the Bregman divergence is well‑defined, the Fisher–metric gradient flow is independent of $\alpha$.
\end{proof}
\end{proposition}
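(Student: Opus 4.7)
The plan is to exploit the observation that the Riemannian gradient is a purely metric construction: $\nabla^g D$ is defined as the unique tangent vector field $V$ satisfying $g(V,W)=dD(W)$ for every tangent $W$, and this defining equation involves only the metric $g$ and the exterior differential $dD$. Neither object requires a connection, so neither depends on $\alpha$; the family $\nabla^{(\alpha)}$ enters only when one takes covariant derivatives of tensor fields or computes geodesics. This single observation essentially gives the proposition, and the real work is to arrange the book‑keeping so that no hidden $\alpha$–dependence creeps back in.

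First I would record the Bregman potential $\varphi$ supplied by the hypothesis, so that $D(p\Vert q)=\varphi(p)-\varphi(q)-\langle\nabla\varphi(q),p-q\rangle$, and note that $\varphi$ is determined by $D$ up to an affine term that vanishes under $d$. Intrinsically I would then write $\nabla^g D = g^{\sharp}(dD)$, where $g^{\sharp}:T^{*}\Delta^{n-1}\to T\Delta^{n-1}$ is the musical isomorphism induced by the Fisher--Rao metric. Since both $g$ and $dD$ are fixed as soon as the statistical structure and the divergence are fixed, the parameter $\alpha$ simply never appears, and the gradient flow $\dot p = -\nabla^g D(p)$ is identical for every $\alpha\in\mathbb{R}$.

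To cross‑check the intrinsic statement by explicit computation I would pass to affine coordinates on the open simplex. The Fisher metric reads $g_{ij}(p)=\delta_{ij}/p_i$ and its inverse, restricted to the tangent space $\{v:\sum_i v_i=0\}$, is $g^{ij}(p)=p_i\delta_{ij}-p_ip_j$. Differentiating the Bregman formula yields $\partial_j D(p\Vert q)=\partial_j\varphi(p)-\partial_j\varphi(q)$, and raising the index with $g^{ij}$ produces an expression in which only $\varphi$, $p$, and $q$ appear; no Christoffel symbols enter. Since all $\alpha$–connections share the same Levi--Civita symmetric part and differ only by torsion, and torsion cannot appear in the musical dualisation of a one‑form, this coordinate expression is manifestly $\alpha$–independent.

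The main obstacle I foresee is the book‑keeping imposed by the constraint $\sum_i p_i=1$: the simplex is a codimension‑one submanifold of $\mathbb{R}^n$, so I must check that the vector field written down really does lie tangent to $\Delta^{n-1}$, i.e.\ that $\sum_i(\nabla^g D)_i(p)=0$. The projection term $-p_i\sum_k p_k(\partial_k\varphi(p)-\partial_k\varphi(q))$ built into the formula is precisely what enforces this, and a short summation confirms the cancellation. As an alternative that sidesteps this bookkeeping altogether, I would mention passing to dual exponential (or mixture) coordinates in which the simplex is affinely flat and the Fisher metric is the Hessian of a convex potential; there $\alpha$–independence of $\nabla^g D$ is manifest because any torsion contribution vanishes in a Hessian, and the entire proposition reduces to the trivial remark that raising indices is a metric operation.
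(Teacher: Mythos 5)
Your proposal is correct and follows essentially the same two‑pronged strategy as the paper's own proof: the intrinsic observation that $\nabla^g D = g^\sharp(dD)$ involves only the metric and the differential (hence no torsion and no $\alpha$), cross‑checked by the identical coordinate computation with $g_{ij}=\delta_{ij}/p_i$, $g^{ij}=p_i\delta_{ij}-p_ip_j$, and $\partial_j D = \partial_j\varphi(p)-\partial_j\varphi(q)$. The only addition beyond the paper is your explicit verification that $\sum_i(\nabla^g D)_i=0$, which is a welcome but minor clarification of the tangency claim the paper asserts without summation.
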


\end{example}

\begin{example}[Hyperbolic toral automorphism]\label{ex:toral}
As a complement to the preceding dissipative example, consider an invertible chaotic system on a compact phase space.  Let $M=\mathbb{T}^2=\mathbb{R}^2/\mathbb{Z}^2$ be the two‑dimensional torus, and define the map $f:M\to M$ by
\[
  f(x,y) \;:=\; (x+y,\, y+2x) \pmod{1}.
\]
  The map $f$ is induced by the integer matrix $A=\begin{pmatrix}1&1\\2&1\end{pmatrix}$ and is called a hyperbolic toral automorphism or ``cat map''.  The matrix $A$ has eigenvalues $\lambda_{\pm}=1\pm\sqrt{2}$, one of which has absolute value greater than one and the other less than one.  Consequently, $f$ stretches and contracts along distinct directions and is an Anosov diffeomorphism: there are stable and unstable foliations such that distances along the unstable direction grow exponentially under iteration while distances along the stable direction decay exponentially.  

  The Haar (Lebesgue) measure $\mu$ on $\mathbb{T}^2$ is invariant under $f$, and the system is mixing: for any H\"older continuous observables $\varphi,\psi$ on $\mathbb{T}^2$, the correlations $\int \varphi\circ f^n \cdot \psi\,d\mu - \int \varphi\,d\mu\int \psi\,d\mu$ decay exponentially in $n$.  This follows from the fact that $f$ is an Anosov diffeomorphism and hence has a spectral gap on H\"older spaces.  The hyperbolic toral automorphism therefore provides a non‑trivial, invertible chaotic subsystem within our framework.  It exemplifies sensitive dependence on initial conditions: two points that are arbitrarily close in $\mathbb{T}^2$ will typically separate at an exponential rate under iteration of $f$, while still remaining on the compact attractor $\mathbb{T}^2$.  This example broadens the scope of our concrete models by showing that invertible hyperbolic dynamics fit naturally into the state‑space perspective.
\end{example}

\begin{example}[Projection to static descriptors]
Let $M$ have some coordinate or descriptor functions $f_i:M\to \mathbb{R}$ (for example, if $M$ is a Banach space, these could be linear functionals or coordinates). If $x(t)$ is a trajectory in $M$, then $f_i(x(t))$ is a real‑valued function of time. The continuity of $x(t)$ implies each $f_i(x(t))$ is a continuous real function. In practice, one might think of $f_i(x(t))$ as the time‑evolution of certain quantitative aspects of the state (though we do not commit to a particular interpretation). This is analogous to observing or measuring certain coordinates of a moving point in a topological space.
\end{example}

We note that continuity of $x(t)$ formalizes the idea that the state changes gradually, without abrupt jumps: if two time points $t_1,t_2$ are close, then the states $x(t_1),x(t_2)$ are close (in the topological or metric sense). This aligns with the intuitive notion that an internal state (like a train of thought) flows smoothly, rather than teleporting between unrelated configurations in zero time.

\subsection{Existence and uniqueness of flows (analytic approach)}

In general, an arbitrary continuous trajectory can be a very complicated object. To gain more insight, we often specify a dynamical law that the trajectory should satisfy—typically an ODE or an iterative functional equation—and then prove that a trajectory exists and perhaps is unique given initial data. This mirrors how in classical mechanics or dynamical systems, one specifies $\dot{x}=F(x)$ or $x_{n+1}=T(x_n)$ and then studies solutions.

\paragraph{Setup.}
Assume $(M,d)$ is a complete metric space (in particular, a Banach space if $M$ has a linear structure) so that we can use metric fixed‑point theorems. Let $F:M\to TM$ denote an evolution rule, which could be:
\begin{itemize}
  \item a vector field $F$ giving a direction of motion at each state (so $F(x)\in T_x M$, the tangent space at $x$, if $M$ is a differentiable manifold or Banach space; then we consider ODE $\dot{x}(t)=F(x(t))$), or
  \item a discrete update map $T:M\to M$ giving a next state from a current state (then we consider the recursion $x_{n+1}=T(x_n)$ or a difference equation).
\end{itemize}

We treat the continuous‑time case first. Suppose $M$ is a (finite or infinite‑dimensional) differentiable manifold or Banach space so that the equation
\begin{equation}\label{eq:ode}
  \dot{x}(t)=F(x(t)),\quad x(0)=s_0\in M,
\end{equation}
makes sense under appropriate smoothness of $F$. This is an initial value problem. Existence and uniqueness of solutions $x(t)$ for small time is a classical result under Lipschitz conditions on $F$. We sketch a proof using the Banach fixed‑point theorem for completeness and to emphasize the analytical techniques at play.

\begin{proposition}[Local existence and uniqueness via Banach fixed‑point]\label{prop:banach}
Let $M$ be a Banach space with norm $|\cdot|$, and $F:M\to M$ be a Lipschitz continuous map (i.e.\ there exists $L\ge 0$ such that $|F(u)-F(v)|\le L\,|u-v|$ for all $u,v\in M$). Then for any initial state $s_0\in M$, there exists a time $T>0$ and a unique continuous differentiable trajectory $x:[-T,T]\to M$ such that $x(0)=s_0$ and $\dot{x}(t)=F(x(t))$ for all $t\in [-T,T]$.
\end{proposition}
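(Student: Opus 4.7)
The plan is to recast the initial value problem as a fixed‑point equation on a suitable complete metric space and then invoke the Banach contraction principle. First I would integrate \eqref{eq:ode} formally to obtain the equivalent integral equation
\[
  x(t) = s_0 + \int_0^t F(x(s))\,ds,
\]
interpreting the right‑hand side as a Bochner integral of a continuous Banach‑valued map (which exists because $F$ is continuous and $x$ will be continuous). Any continuous solution of this integral equation is automatically $C^1$ by the fundamental theorem of calculus, and conversely a $C^1$ solution of the ODE satisfies the integral equation; so the two formulations are equivalent.

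Next I would choose a time $T>0$ (to be fixed shortly) and work in the space $X = C([-T,T],M)$ of continuous maps from $[-T,T]$ to $M$, equipped with the supremum norm $\|x\|_\infty = \sup_{|t|\le T}|x(t)|$. Since $M$ is Banach, $X$ is itself a Banach space and in particular a complete metric space. Inside $X$ I would consider the closed ball $B = \{x\in X : \|x - \hat s_0\|_\infty \le r\}$, where $\hat s_0$ denotes the constant map with value $s_0$ and $r>0$ is an auxiliary radius; closedness of $B$ in a complete space makes $B$ itself complete. I would then define the Picard operator $\Phi:B\to X$ by $(\Phi x)(t) = s_0 + \int_0^t F(x(s))\,ds$.

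The key estimates are two. First, stability: using that $|F(x(s))|\le |F(s_0)| + L\,|x(s)-s_0| \le |F(s_0)| + Lr$ on $B$, one obtains $\|\Phi x - \hat s_0\|_\infty \le T(|F(s_0)|+Lr)$, which is $\le r$ provided $T$ is small enough (say $T \le r/(|F(s_0)|+Lr)$). Second, contraction: for $x,y\in B$ the Lipschitz bound gives
\[
  |(\Phi x)(t) - (\Phi y)(t)| \le \int_0^{|t|} L\,|x(s)-y(s)|\,ds \le LT\,\|x-y\|_\infty,
\]
so $\Phi$ is a contraction of constant $LT$ as soon as $T<1/L$ (when $L=0$ any $T>0$ works). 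Choosing $T$ to satisfy both conditions, the Banach fixed‑point theorem yields a unique $x\in B$ with $\Phi x = x$, and the fundamental theorem of calculus then shows that $x$ is continuously differentiable and solves \eqref{eq:ode} on $[-T,T]$.

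For uniqueness beyond the chosen ball I would argue as follows: if $\tilde x$ is any other $C^1$ solution on $[-T,T]$ with $\tilde x(0)=s_0$, a Grönwall‑type argument applied to $\varphi(t)=|x(t)-\tilde x(t)|$ using $\varphi(t)\le L\int_0^{|t|}\varphi(s)\,ds$ forces $\varphi\equiv 0$; equivalently, one shrinks $T$ if needed so that both solutions lie in $B$ and invokes uniqueness of the fixed point. The main technical obstacle is not the contraction estimate itself but the careful bookkeeping in the infinite‑dimensional setting: one must justify the Bochner integral, verify measurability of $s\mapsto F(x(s))$ (automatic from continuity), and confirm that $\Phi$ maps the chosen invariant ball into itself before applying the contraction argument. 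Once those points are in place, the calibration $T<\min\{1/L,\,r/(|F(s_0)|+Lr)\}$ makes the argument go through verbatim for both $t>0$ and $t<0$, yielding the two‑sided trajectory claimed.
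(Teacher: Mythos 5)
Your proposal is correct and uses the same core idea as the paper: recast the ODE as the integral equation $x(t)=s_0+\int_0^t F(x(s))\,ds$, view the right-hand side as a Picard operator $\Phi$ on $C([-T,T],M)$ with the sup norm, and show $\Phi$ is a contraction of constant $LT<1$, then apply the Banach fixed-point theorem. The difference is that you restrict to a closed ball $B$ about the constant function $\hat s_0$, prove $\Phi$ maps $B$ into itself via the bound $\|\Phi x-\hat s_0\|_\infty\le T(|F(s_0)|+Lr)\le r$, and then deal with uniqueness outside $B$ by a Gr\"onwall argument. Under the stated hypothesis this extra machinery is actually unnecessary: since $F$ is \emph{globally} Lipschitz, $\Phi$ maps all of $C([-T,T],M)$ into itself, the contraction estimate holds on the whole space, and the unique fixed point in $C([-T,T],M)$ automatically dominates any competing $C^1$ solution, so neither the invariance-of-the-ball step nor the Gr\"onwall step is needed. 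The paper exploits this and works directly on $C([-T,T],M)$ with the single condition $LT<1$. What your version buys is robustness: the ball-plus-invariance argument is precisely what one needs if $F$ is merely \emph{locally} Lipschitz (or only defined on an open subset of $M$), so your proof generalises immediately to that setting, whereas the paper's would not. For the proposition as stated, though, the paper's shorter route suffices, and you could simplify yours by dropping the radius $r$ and the Gr\"onwall step.
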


\begin{example}[Irrational circle rotation]\label{ex:circle-rotation}
As a concrete illustration of the dynamical phenomena discussed above, consider the unit circle $S^1=\{z\in\mathbb{C}:|z|=1\}$ with its usual topology and normalized arc-length (Lebesgue) measure.  Fix an irrational angle $\theta\in\mathbb{R}\setminus \mathbb{Q}$ and define the flow $\Phi^t:S^1\to S^1$ by
\[ \Phi^t(z)\;:=\;e^{2\pi i\theta t}\,z. \]
This is a one‑parameter group of rotations: $\Phi^{t+s}=\Phi^t\circ \Phi^s$ and $\Phi^0$ is the identity.  The Lebesgue measure on $S^1$ is invariant under $\Phi^t$ and the system is ergodic: there are no non‑trivial measurable sets that are invariant under all rotations.  In particular, if $R_{\theta}:S^1\to S^1$ denotes the time–$1$ map $R_{\theta}(z)=e^{2\pi i\theta}z$, then $R_{\theta}$ preserves Lebesgue measure and is ergodic (see, for example, Walters' “Introduction to Ergodic Theory” or Petersen’s “Ergodic Theory” for a proof of ergodicity of irrational rotations).  By the Poincaré recurrence theorem, for almost every point $z\in S^1$ and every neighbourhood $U$ of $z$, there exists $n\ge1$ such that $R_{\theta}^n(z)\in U$.  Thus this simple model exhibits the recurrence phenomenon described in Theorem~\ref{thm:recurrence}.  For rational angles $\theta=p/q$, the orbits are periodic and recurrence is trivial; for irrational $\theta$, the orbits are dense in $S^1$ and revisit every neighbourhood infinitely often.
\end{example}

\begin{proof}[Sketch]
Consider the Banach space $C([-T,T],M)$ of continuous functions from the interval $[-T,T]$ to $M$, with the uniform norm $|x|_\infty:=\sup_{t\in [-T,T]}|x(t)|$. We will use the Picard iteration method, which can be cast as a fixed‑point problem. Define an operator $\Phi$ on $C([-T,T],M)$ by
\[
(\Phi(x))(t):=s_0+\int_0^t F(x(\tau))\,d\tau,\quad t\in [-T,T].
\]
Here the integral is the (Riemann or Bochner) integral in the Banach space $M$. Clearly $\Phi(x)$ is well‑defined and yields a continuously differentiable function if $x$ is continuous. A fixed point of $\Phi$ is a function $x(t)$ satisfying
\[
x(t)=s_0+\int_0^t F(x(\tau))\,d\tau,
\]
which, by differentiation, is equivalent to $x(0)=s_0$ and $\dot{x}(t)=F(x(t))$. So fixed points of $\Phi$ correspond exactly to solutions of our ODE~\eqref{eq:ode}.

We now show $\Phi$ is a contraction on an appropriate closed subspace of $C([-T,T],M)$ if $T$ is chosen small enough. For $x,y\in C([-T,T],M)$, we have:
\begin{align*}
| (\Phi(x)-\Phi(y)) |_\infty
&=\sup_{|t|\le T}\Bigl|\int_0^t \bigl(F(x(\tau))-F(y(\tau))\bigr)\,d\tau\Bigr|\\
&\le \sup_{|t|\le T}\int_0^{|t|} |F(x(\tau))-F(y(\tau))|\,d\tau.
\end{align*}
Using the Lipschitz assumption $|F(x(\tau))-F(y(\tau))|\le L\,|x(\tau)-y(\tau)|$, this is bounded by
\[
| (\Phi(x)-\Phi(y)) |_\infty \le L\,T\,|x-y|_\infty.
\]
Thus $|\Phi(x)-\Phi(y)|_\infty \le L T\,|x-y|_\infty$. Now choose $T>0$ such that $L T<1$, for example $T=\frac{1}{2L}$ (if $L=0$, any $T$ works; the case $L=0$ means $F$ is constant so the solution is trivial). Then $\Phi$ is a contraction mapping on $C([-T,T],M)$ with contraction constant $q=LT<1$.

By Banach’s fixed‑point theorem, any contraction on a complete metric space has a unique fixed point. The space $C([-T,T],M)$ is complete (since $M$ is complete and $[-T,T]$ is compact, $C([-T,T],M)$ is complete under sup‑norm). Therefore, $\Phi$ has a unique fixed point $x^\ast(t)$ in this space. By the earlier reasoning, $x^\ast(t)$ is exactly the unique solution of $\dot{x}=F(x)$ with $x(0)=s_0$. This proves local existence and uniqueness.

  Furthermore, the Banach fixed‑point theorem not only ensures existence and uniqueness, but also gives a constructive method to approximate the solution: iterating $\Phi$ starting from $x_0(t)\equiv s_0$ yields a sequence $x_{n+1}=\Phi(x_n)$ that converges in sup‑norm to $x^\ast(t)$.
\end{proof}
\begin{example}[Concrete recurrence in an irrational rotation]
Consider again the circle rotation of Example~\ref{ex:circle-rotation} with an irrational angle $\theta$ and flow $\Phi^t(z)=e^{2\pi i\theta t}z$.  The normalized arc‑length measure on $S^1$ is finite and invariant under $\Phi^t$, trajectories remain in the compact set $S^1$, and the measure has full support.  The proof of Theorem~\ref{thm:recurrence} applies, but one can also compute explicit return times using Diophantine approximation.  Given $\varepsilon>0$ and $z\in S^1$, Dirichlet’s approximation theorem produces integers $p,q$ with $|\theta q-p|<1/q$; taking $n=q$ gives $|e^{2\pi i\theta q}-1|<2\pi/q$, so $\Phi^n(z)$ lies within $2\pi/q$ of $z$.  Thus the return times $n$ grow (typically like denominators of convergents of $\theta$) and provide a concrete sequence along which the orbit returns arbitrarily close to its starting point.  This example illustrates the hypotheses and conclusion of the recurrence theorem in a specific measure‑preserving flow.
\end{example}

\begin{remark}
The argument above implicitly used the definability of the natural numbers inside the time sort $(\mathbb{R},+,\cdot,<)$ by interpreting $n\in\mathbb{N}$ as the $n$‑fold sum $1+\cdots+1$.  If one wishes to avoid this reliance on coding arithmetic into the continuum, an alternative is to enrich the language $\mathcal{L}_2$ with a second sort for natural numbers or to introduce a unary predicate \(\mathrm{Nat}(t)\) picking out those times that correspond to integer indices.  In such a two‑sorted or predicate‑enriched setting one works with a first‑order theory of real numbers and a separate first‑order theory of natural numbers linked by the interpretation of integers in the reals.  This approach makes the coding of syntax and the diagonal argument manifestly first‑order and avoids commitments to higher‑order quantification over the continuum.  Our choice to interpret arithmetic directly in the time sort follows the classical presentation of Tarski’s theorem, but the essential non‑definability phenomenon persists in the two‑sorted formulation.
\end{remark}
\begin{example}[Explicit modulus for a state spread]\label{ex:modulus}
To illustrate Theorem~\ref{thm:intuitionistic-continuity} concretely, take $M=[0,1]$ with the usual Euclidean metric and define a spread by imposing a uniform bound on successive differences of a choice sequence: set \(\delta(k)=2^{-(k+2)}\), so that any admissible finite sequence $(s_0,\dots,s_n)$ must satisfy $|s_{k+1}-s_k|\le \delta(k)$ for each $k<n$.  This choice ensures that longer and longer initial segments determine points that are increasingly close.  Consider the observable $F:M\to \mathbb{R}$ given by $F(s)=s^2$.  Since $F$ depends only on the limit state, a simple computation shows that if $|s-t|<\omega(\epsilon)$ then $|s^2 - t^2|<\epsilon$ provided we take
\[
  \omega(\epsilon)\;=\;\min\{\sqrt{\epsilon},\,1/4\}.
\]
Indeed, for any $s,t\in[0,1]$ one has
\[
  |s^2-t^2|\;=\;|s-t|\,|s+t|\;\le\;2|s-t|.
\]
To ensure $|s^2-t^2|<\epsilon$ it therefore suffices to require $|s-t|<\epsilon/2$.  Consequently we may take
\[
  \omega(\epsilon)\;:=\;\min\left\{\tfrac{\epsilon}{2},\,\tfrac{1}{4}\right\}.
\]
If $|s-t|<\omega(\epsilon)$ and $0<\epsilon\le1$, then $|s^2-t^2|<\epsilon$ follows from the bound above.  In the context of the spread defined by $\delta(k)$, agreement of two choice sequences on the first $N$ terms forces $|s-t|<2^{-(N+1)}$.  To achieve $|s^2-t^2|<\epsilon$ it suffices to choose $N$ so large that $2^{-(N+1)}<\epsilon/2$.  This explicit computation demonstrates how a modulus of continuity can be extracted from the uniform bound on successive differences, and it confirms the general conclusion of Theorem~\ref{thm:intuitionistic-continuity} in a familiar setting.
\end{example}
\paragraph{Case studies.}  The abstract dichotomy presented in Theorem~\ref{thm:comparative} becomes clearer when one examines specific examples.

\begin{example}[Information‑geometric flow not simulatable]
  Information geometry studies statistical manifolds whose points are probability distributions and whose geometry is defined by the Fisher–Rao metric together with a one‑parameter family of affine connections, known as the \(\alpha\)–connections (see, e.g., Amari and Nagaoka’s monograph).  For each real parameter \(\alpha\) one obtains a distinct connection \(\nabla^{(\alpha)}\) whose Christoffel symbols differ by skew‑symmetric torsion terms.  Two kinds of dynamical flows arise naturally in this setting:
  \begin{itemize}
    \item \emph{Geodesic or parallel‑transport‑based flows.}  These are defined by the geodesic equation associated with \(\nabla^{(\alpha)}\).  Because the Christoffel symbols appear explicitly in the geodesic equation, the corresponding vector fields and trajectories depend on \(\alpha\).  In particular, the evolution of a state along a geodesic under \(\nabla^{(\alpha)}\) typically differs from that under \(\nabla^{(\alpha')}\) when \(\alpha\neq\alpha'\).
    \item \emph{Metric gradient flows.}  Given a divergence function \(D\) on the simplex, one can consider the gradient vector field of \(D\) with respect to the Fisher metric.  This flow is defined without reference to a connection: the gradient is obtained by raising indices using the Riemannian metric.  Because torsion terms of the \(\alpha\)–connections do not enter into this definition, the gradient vector field (and hence its time–one map) is the same for every choice of \(\alpha\).  A concrete coordinate calculation illustrating this fact appears below.
  \end{itemize}
  Thus there is no contradiction between the observation in Example \ref{ex:circle-rotation} that certain dynamical flows depend on \(\alpha\) and the subsequent demonstration that metric gradients are \(\alpha\)–independent.  The first phenomenon concerns connection‑driven geodesics, while the second concerns metric gradients of divergences; Proposition~\ref{prop:nogo} is restricted to the latter class of dynamics.

  To formalise the topological–forgetful perspective, fix a divergence function $D$ (e.g.\ the Kullback–Leibler divergence) and define a functor $F^{\mathrm{top}}$ from the category of statistical manifolds with \(\alpha\)–connections to \(\mathbf{TopFlow}\) as follows: send the object $(\Delta^{n-1},\nabla^{(\alpha)})$ to the pair $(\Delta^{n-1},\Phi)$ where $\Phi$ is the gradient flow of $D$ computed using \emph{any} connection (the formula for the gradient vector field of a convex divergence on \(\Delta^{n-1}\) is in fact independent of \(\alpha\) when expressed in affine coordinates).  A morphism $f:(\Delta^{n-1},\nabla^{(\alpha)})\to(\Delta^{n-1},\nabla^{(\alpha')})$ in the statistical category is typically the time–1 map of the gradient flow of $D$ relative to $\nabla^{(\alpha)}$.  However, the underlying continuous map on $\Delta^{n-1}$ depends only on the gradient vector field and hence is independent of the choice of \(\alpha\).  Consequently, $F^{\mathrm{top}}(f)$ and $F^{\mathrm{top}}(g)$ coincide whenever $f$ and $g$ share the same underlying pointwise action but differ in the connection used to define their differential properties.  A particularly transparent instance of this collapse occurs for the identity morphisms: the identity arrow $\mathrm{id}_{(\alpha)}:(\Delta^{n-1},\nabla^{(\alpha)})\to(\Delta^{n-1},\nabla^{(\alpha)})$ and the identity arrow $\mathrm{id}_{(\alpha')}$ between the same underlying manifold equipped with a different connection are distinct morphisms in the information‑geometric category (because the target carries different connection data), yet $F^{\mathrm{top}}$ maps both to the identity flow on $\Delta^{n-1}$.  Thus the forgetful functor cannot distinguish between them, demonstrating the non‑faithfulness described in the obstruction part of Theorem~\ref{thm:comparative}.  These models illustrate how the rich geometric data of information manifolds falls outside the simulating power of our purely topological framework.
\end{example}

\begin{example}[Linear dynamical systems and a faithful functor]
As a contrast, consider the category $\mathsf{LinFlow}$ whose objects are finite‑dimensional real vector spaces equipped with linear flows (i.e. one‑parameter semigroups of invertible linear maps generated by constant coefficient matrices), and whose morphisms are linear maps intertwining the flows.  The usual direct sum endows $\mathsf{LinFlow}$ with a symmetric monoidal structure.  Define a functor $F: \mathsf{LinFlow}\to \mathbf{TopFlow}$ by sending a linear dynamical system $(V,\{e^{tA}\}_{t\in\mathbb{R}})$ to the underlying topological space $V$ (with its Euclidean topology) together with the continuous flow $\Phi^t:v\mapsto e^{tA}v$, and sending a morphism $T:V\to W$ to itself.  Because linear maps are continuous and respect the flows, $F$ is a monoidal functor.  It is faithful since distinct linear maps yield distinct continuous maps, and it preserves the monoidal product up to natural isomorphism ($F(V\oplus W)\cong F(V)\times F(W)$).  Thus $\mathsf{LinFlow}$ provides a nontrivial example of a categorical dynamical system that \emph{can} be simulated within our topological framework.  This case study illustrates the ``simulatability'' direction of Theorem~\ref{thm:comparative}.
\end{example}

\begin{proposition}[Concrete no‑go for information–geometric functors]\label{prop:nogo}
Fix an integer $n\ge 2$ and let $\Delta^{n-1}$ denote the standard simplex of positive probability vectors in $\mathbb{R}^n$.  For each real parameter $\alpha$ consider the statistical manifold $(\Delta^{n-1},\nabla^{(\alpha)})$ equipped with the $\alpha$–connection $\nabla^{(\alpha)}$ and the Fisher–Rao metric.  Define a category $\mathcal{C}_\alpha$ by the following data:
\begin{itemize}
  \item \emph{Object.}  The sole object of $\mathcal{C}_\alpha$ is $(\Delta^{n-1},\nabla^{(\alpha)})$.
  \item \emph{Morphisms.}  A morphism in $\mathcal{C}_\alpha$ is the time–$1$ map of the \emph{metric gradient flow} of a convex divergence $D$ on $\Delta^{n-1}$.  That is, for a chosen divergence $D$ one considers its gradient vector field with respect to the Fisher metric (which is independent of the torsion terms in $\nabla^{(\alpha)}$) and defines the morphism to be $\Phi_D^1$, the time–$1$ map of the resulting flow.  Composition in $\mathcal{C}_\alpha$ is composition of time–$1$ maps, corresponding to composing flows generated by possibly different divergences, and the identity morphism $\mathrm{id}_{(\alpha)}$ is the time–$1$ map of the zero divergence $D\equiv 0$, which acts as the identity on the underlying simplex.
\end{itemize}
Because gradient vector fields of convex divergences are independent of the parameter $\alpha$ (as shown by the coordinate calculation above), the underlying set map of any morphism in $\mathcal{C}_\alpha$ agrees with the corresponding map in $\mathcal{C}_{\alpha'}$ for all $\alpha'$.  In particular, if $\alpha\neq \alpha'$ then the categories $\mathcal{C}_\alpha$ and $\mathcal{C}_{\alpha'}$ have the same object and the same collection of underlying set maps, but their identity arrows $\mathrm{id}_{(\alpha)}$ and $\mathrm{id}_{(\alpha')}$ are regarded as distinct morphisms because they arise from different connections.

Let $P$ be the property ``distinguishes the parameter $\alpha$'': a monoidal functor $F:\mathcal{C}_\alpha\to \mathbf{TopFlow}$ is said to satisfy $P$ if whenever $\alpha\neq \alpha'$, the induced flows $F(\Delta^{n-1},\nabla^{(\alpha)})$ and $F(\Delta^{n-1},\nabla^{(\alpha')})$ are non‑isomorphic in $\mathbf{TopFlow}$.  Then no monoidal functor $F:\mathcal{C}_\alpha\to \mathbf{TopFlow}$ satisfies $P$.
\end{proposition}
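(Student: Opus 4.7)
The plan is to leverage Proposition~\ref{prop:alpha-independence} to exhibit an explicit isomorphism of monoidal categories between $\mathcal{C}_\alpha$ and $\mathcal{C}_{\alpha'}$ that acts as the identity on the underlying set-level data, and then to observe that any functor must send this isomorphism to an isomorphism in $\mathbf{TopFlow}$, ruling out property $P$.

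We would first define, for each pair $\alpha,\alpha'\in\mathbb{R}$, a strict monoidal functor $\Psi_{\alpha,\alpha'}:\mathcal{C}_\alpha\to\mathcal{C}_{\alpha'}$ that sends the lone object $(\Delta^{n-1},\nabla^{(\alpha)})$ to $(\Delta^{n-1},\nabla^{(\alpha')})$ and sends the time-$1$ map $\Phi^1_{D,\alpha}$ of the Fisher--metric gradient flow of a convex divergence $D$ to $\Phi^1_{D,\alpha'}$, extending by composition to arbitrary morphisms. Proposition~\ref{prop:alpha-independence} guarantees that $\Phi^1_{D,\alpha}$ and $\Phi^1_{D,\alpha'}$ share the same underlying continuous self-map of $\Delta^{n-1}$, so the assignment preserves composition, identities, and the induced monoidal product on divergences. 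It is a bijection on morphisms by construction, and its inverse is $\Psi_{\alpha',\alpha}$; hence $\Psi_{\alpha,\alpha'}$ is an isomorphism of monoidal categories.

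Next, we would invoke the universal fact that functors send isomorphisms to isomorphisms. Reading $F$ uniformly as a family of monoidal functors indexed by $\alpha$ (equivalently, as a single monoidal functor on the disjoint union $\bigsqcup_\beta\mathcal{C}_\beta$, or as the restriction of a functor on an enclosing information-geometric category), functoriality applied to $\Psi_{\alpha,\alpha'}$ yields an isomorphism $F(\Psi_{\alpha,\alpha'}):F(\Delta^{n-1},\nabla^{(\alpha)})\xrightarrow{\sim}F(\Delta^{n-1},\nabla^{(\alpha')})$ in $\mathbf{TopFlow}$. Consequently the two flows are isomorphic there, which is precisely the negation of property $P$.

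The hard part will be notational rather than mathematical: each $\mathcal{C}_\alpha$ has a single object, so a functor defined only on $\mathcal{C}_\alpha$ cannot literally evaluate the object of $\mathcal{C}_{\alpha'}$. We would therefore open the proof by fixing a convention under which the comparison in property $P$ is well-posed, after which the isomorphism argument above closes the proposition. A minor technicality is to verify that $\Psi_{\alpha,\alpha'}$ respects the tensor product of divergences on $\Delta^{n-1}\times\Delta^{n-1}$; this again reduces, fibrewise, to Proposition~\ref{prop:alpha-independence} and introduces no new obstacles.
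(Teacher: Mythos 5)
Your plan identifies the right structural phenomenon---the $\alpha$--independence established in Proposition~\ref{prop:alpha-independence} makes the categories $\mathcal{C}_\alpha$ and $\mathcal{C}_{\alpha'}$ isomorphic as monoidal categories via the map $\Psi_{\alpha,\alpha'}$ you describe---and you are right to flag that property $P$ is not literally well-posed for a functor defined on a single one-object category. However, the step where you ``invoke the universal fact that functors send isomorphisms to isomorphisms'' and write $F(\Psi_{\alpha,\alpha'})$ contains a genuine type error that is not merely notational. The object $\Psi_{\alpha,\alpha'}$ is an isomorphism \emph{between categories}, i.e., a $1$-cell in $\mathbf{Cat}$, not a morphism inside the domain of $F$. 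Under any of your proposed conventions this expression is undefined: in the disjoint union $\bigsqcup_\beta \mathcal{C}_\beta$ there are no morphisms between objects in different components, so $\Psi_{\alpha,\alpha'}$ cannot live there; and in an enclosing information-geometric category whose morphisms preserve the connection data, $(\Delta^{n-1},\nabla^{(\alpha)})$ and $(\Delta^{n-1},\nabla^{(\alpha')})$ are by design \emph{not} isomorphic, so no morphism in the domain of $F$ realises your comparison.

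What actually makes the argument close is an extra hypothesis on $F$ that you have not stated and cannot derive: that the family $\{F_\beta\}$ is compatible with $\Psi$, in the sense that $F_\alpha \cong F_{\alpha'}\circ\Psi_{\alpha,\alpha'}$, or equivalently that $F$ depends only on the underlying set-level action of morphisms (i.e., factors through the forgetful functor). Without that hypothesis the proposition is false as you have framed it: one can let $F_\alpha$ send the single object of $\mathcal{C}_\alpha$ to a flow on a space that varies with $\alpha$ (say, a circle for one $\alpha$ and an interval for another), producing non-isomorphic images and thereby satisfying $P$. The paper handles this by making the pointwise-dependence assumption explicit in Lemma~\ref{lem:pointwise-collapse} and importing it (somewhat informally) into the proof of Proposition~\ref{prop:nogo}. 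To repair your argument, you should state this compatibility condition as a standing assumption on the family of functors before constructing $\Psi_{\alpha,\alpha'}$; once it is assumed, your isomorphism $\Psi_{\alpha,\alpha'}$ does yield the desired identification $F_\alpha(\text{object})\cong F_{\alpha'}(\text{object})$, and the rest of your argument goes through.
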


\begin{proof}
Fix two parameters $\alpha\neq \alpha'$.  The statistical manifolds $(\Delta^{n-1},\nabla^{(\alpha)})$ and $(\Delta^{n-1},\nabla^{(\alpha')})$ have the same underlying set but different connection data.  By the coordinate calculation preceding this proposition, the metric gradient vector field of any convex divergence on $\Delta^{n-1}$ is independent of $\alpha$.  Consequently, the time–$1$ maps of these gradient flows (the morphisms of $\mathcal{C}_\alpha$ and $\mathcal{C}_{\alpha'}$) act identically on the underlying set, even though in $\mathcal{C}_\alpha$ they are labelled by the connection $\nabla^{(\alpha)}$ and in $\mathcal{C}_{\alpha'}$ by $\nabla^{(\alpha')}$.  In particular, the identity morphisms $\mathrm{id}_{(\alpha)}$ and $\mathrm{id}_{(\alpha')}$ act as the identity map on the simplex but are regarded as distinct morphisms in their respective categories because they refer to different geometric structures.

Now suppose $F:\mathcal{C}_\alpha\to \mathbf{TopFlow}$ is a monoidal functor satisfying property $P$.  Then $F$ must assign distinct topological flows to $(\Delta^{n-1},\nabla^{(\alpha)})$ and $(\Delta^{n-1},\nabla^{(\alpha')})$ whenever $\alpha\neq \alpha'$.  However, $F$ sees only the underlying continuous maps of morphisms.  Since the time–$1$ maps in $\mathcal{C}_\alpha$ and $\mathcal{C}_{\alpha'}$ coincide as functions, $F$ must send $\mathrm{id}_{(\alpha)}$ and $\mathrm{id}_{(\alpha')}$ to the same morphism in $\mathbf{TopFlow}$.  This contradicts the requirement that $F$ distinguishes $\alpha$ and $\alpha'$ by producing non‑isomorphic flows.  Thus no monoidal functor $F:\mathcal{C}_\alpha\to \mathbf{TopFlow}$ satisfies property $P$.
\end{proof}

\begin{remark}
The length of the interval $[-T,T]$ on which the solution is guaranteed depends inversely on the Lipschitz constant $L$. This is consistent with classical ODE theory: if the evolution rule can cause rapid changes (large $L$), we can only assert a solution for a short time unless we have global bounds. Under global Lipschitz conditions, one can extend this solution to all $t\in\mathbb{R}$ uniquely.

The above proposition used a metric (normed) structure crucially. In a purely topological $M$, one cannot differentiate or integrate in the same way. However, under some conditions, one can still discuss flows via homeomorphisms or using the theory of topological dynamical systems. For instance, if $M$ is compact Hausdorff, the space $C(\mathbb{R},M)$ of continuous trajectories can be given the compact‑open topology, and one might attempt to define an evolution operator there. Without a linear structure, one often uses abstract existence theorems or transfinite induction to extend partial trajectories.

Under a global Lipschitz assumption on the vector field $F$ the local solution provided by the preceding proposition extends to all time.  Concretely, if there exists $L>0$ such that $|F(x)-F(y)|\le L|x-y|$ for all $x,y\in M$, then the same contraction argument shows that for any initial state $s_0\in M$ the ODE $\dot{x}=F(x)$ admits a unique solution $x:\mathbb{R}\to M$ defined for all $t\in\mathbb{R}$.  This follows by iterating the local existence interval and using the global bound to prevent blow‑up.  Thus globally Lipschitz flows are complete in our framework.
\end{remark}

\begin{example}[Iterative map – discrete time]\label{ex:iteration}
If instead of an ODE we have a discrete update rule $x_{n+1}=T(x_n)$, where $T:M\to M$ is a given function, we look for sequences $(x_0,x_1,x_2,\dots)$ with $x_{n+1}=T(x_n)$. This is a difference equation or iteration. A similar contraction principle applies: if $T$ is a contraction mapping on a complete space $(M,d)$, then by Banach’s fixed‑point theorem there is a unique fixed point $x^\ast$ in $M$, and for any initial state $x_0$ the iteration $x_{n}=T(x_{n-1})$ converges to $x^\ast$ as $n\to\infty$. In the context of our model, a fixed point $x^\ast$ of $T$ would correspond to a state that, once reached, remains invariant thereafter – a sort of absorbing or equilibrium state. Convergence to $x^\ast$ means the state eventually stabilizes. Such behavior might be interpreted as the subject’s state tending towards a particular thought or condition. Not all maps have this property, of course – if $T$ is not a contraction, rich dynamics (cycles, chaos) can occur, which we will touch upon in Section~\ref{sec:dynamics}.
\end{example}

\subsection{Nonlinearity and lack of global measures}

We emphasize that the state space $M$ and the flows on it can be highly non‑linear. Unlike, say, the real line or a Euclidean space, $M$ might not have a globally valid linear structure or convenient coordinates. Even if $M$ is a vector space, the trajectories might explore it in a non‑linear fashion (for instance, following curved paths constrained by $F(x)$ in the ODE).

One consequence of nonlinearity and high dimensionality is the possible absence of a meaningful global measure. In classical analysis, $\mathbb{R}^n$ has Lebesgue measure allowing one to measure ``how much time'' is spent in a region or the ``volume'' of a set of states. But for an arbitrary $M$, especially of large cardinality or structure, we cannot in general assign a finite measure to all subsets. In fact, if $M$ (or even just the time domain $\mathbb{R}$) is treated as a set in ZF set theory with the axiom of choice, we encounter the existence of non‑measurable sets. A classic example, due originally to Vitali and later generalized by Banach and Tarski, shows that the unit interval $[0,1]\subset \mathbb{R}$ can be decomposed into subsets that defy any reasonable length measure. The Banach–Tarski paradox extends this to say one can decompose a solid ball in $\mathbb{R}^3$ into finitely many pieces and reassemble them to form two balls identical to the original. The crux is that those pieces are non‑measurable sets: one cannot assign a volume consistent with translation invariance to them.

In our context, this means: one should be cautious in assuming that for every subset $A\subset M$ (or every portion of the trajectory’s time domain), a ``volume'' or ``duration'' can be well‑defined. To illustrate this formally, we present a result on non‑measurability in our setting:

\begin{proposition}[Existence of non‑measurable subsets of state continuum]\label{prop:nonmeas}
Assume $M$ or the time index set $\mathbb{R}$ has the cardinality of the continuum and that the axioms of $\mathrm{ZF}+\mathrm{Choice}$ (ZFC) hold. Then there exists a subset $N\subset \mathbb{R}$ (hence also a subset of the trajectory’s graph in $\mathbb{R}\times M$ via projection) which is not Lebesgue measurable. In particular, there is no countably additive measure defined on all subsets of $\mathbb{R}$ that extends the length of intervals. Consequently, there is no $\sigma$‑additive measure on all subsets of $M$ (assuming $M$ at least can be injected with $\mathbb{R}$) extending any reasonable notion of volume or probability.
\end{proposition}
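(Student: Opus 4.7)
The plan is to realize the classical Vitali construction inside $[0,1]\subset\mathbb{R}$ to produce a non-Lebesgue-measurable set, and then transfer the obstruction to $M$ via an injection $\iota:\mathbb{R}\hookrightarrow M$. First I would introduce the equivalence relation $x\sim y$ iff $x-y\in\mathbb{Q}$ on $[0,1]$; each equivalence class is countable (a translate of $\mathbb{Q}$ intersected with $[0,1]$), so the quotient set has cardinality $\mathfrak{c}$. Applying the axiom of choice, I would select a transversal $V\subset[0,1]$ with exactly one representative from each class.

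Next I would enumerate $\mathbb{Q}\cap[-1,1]=\{q_k\}_{k\in\mathbb{N}}$ and consider the translates $V_k:=V+q_k$. By the defining property of $V$ these are pairwise disjoint, since two points in $V_k\cap V_j$ would differ by a rational and hence lie in the same $\sim$-class, forcing $q_k=q_j$; I would also verify the two-sided inclusion $[0,1]\subseteq \bigcup_k V_k \subseteq [-1,2]$. Assuming for contradiction that $V$ were Lebesgue measurable, translation invariance gives $\lambda(V_k)=\lambda(V)$ for every $k$, so countable additivity would force $\sum_k \lambda(V)\in[1,3]$, which is impossible since the series is either $0$ or $+\infty$. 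This produces the required non-measurable $N:=V$ and simultaneously rules out any translation-invariant countably additive extension of length to the full power set $2^{\mathbb{R}}$.

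To transfer the obstruction to $M$, I would fix an injection $\iota:\mathbb{R}\hookrightarrow M$ (available by the cardinality hypothesis) and suppose, for contradiction, that a $\sigma$-additive measure $\nu$ on $2^M$ extended a reasonable notion of volume. The pull-back $A\mapsto \nu(\iota(A))$ would define a $\sigma$-additive measure on $2^{\mathbb{R}}$, inheriting the required homogeneity from $\nu$, which contradicts the first part. The explicit witness on the $M$-side is then simply $\iota(N)$.

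The main obstacle is not technical but semantic. The statement invokes the informal phrase ``reasonable notion of volume or probability'', and the Vitali argument genuinely requires translation invariance (or some comparable homogeneity); without it, the mere existence of a $\sigma$-additive extension of length to $2^{\mathbb{R}}$ is in fact independent of \textsf{ZFC}, being equivalent to the existence of a real-valued measurable cardinal $\le\mathfrak{c}$. I would therefore sharpen the written statement to the precise \textsf{ZFC} theorem—nonexistence of a translation-invariant $\sigma$-additive extension of Lebesgue measure to $2^{\mathbb{R}}$—and explicitly stipulate that the ``reasonable'' clause in the consequence for $M$ be interpreted to include whichever homogeneity is inherited through $\iota$, so that the transfer argument closes.
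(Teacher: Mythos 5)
Your proposal follows essentially the same route as the paper: both use the Vitali construction — the equivalence relation $x\sim y$ iff $x-y\in\mathbb{Q}$, an AC-chosen transversal, and the contradiction between translation invariance and countable additivity — with your version spelling out the $[-1,2]$ containment and the series estimate rather than working mod $1$ on the circle as the paper does. The one place you go beyond the paper (and improve on it) is the caveat you raise at the end: the Vitali argument only refutes \emph{translation-invariant} $\sigma$-additive extensions of Lebesgue measure to $2^{\mathbb{R}}$, whereas the paper's sentence ``there is no countably additive measure defined on all subsets of $\mathbb{R}$ that extends the length of intervals'' drops the invariance qualifier and is, as stated, not a theorem of ZFC — the existence of such an extension is equiconsistent with a real-valued measurable cardinal $\le\mathfrak{c}$ (Ulam, Solovay). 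The paper's proof does not address this, so your sharpening to ``no translation-invariant $\sigma$-additive extension,'' and your insistence that the ``reasonable'' clause for $M$ inherit some homogeneity through the injection $\iota$, is the correct way to make both the proposition and the transfer argument close.
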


\begin{proof}
This is essentially the Vitali construction. Identify $\mathbb{R}$ (or $[0,1]$) with the additive group on the unit interval mod $1$. Consider the equivalence relation $x\sim y$ if $x-y\in \mathbb{Q}$ (rationals). Using the axiom of choice, choose one representative from each equivalence class of $\mathbb{R}/\mathbb{Q}$. The set $N$ of those representatives (a ``Vitali set'') intersects each rational equivalence class exactly once. If $N$ were Lebesgue measurable, then the union of rational translates of $N$ within $[0,1]$ would have to have both zero measure (each translate would have the same measure as $N$) and full measure (the union covers $[0,1]$ except a null set), a contradiction. Therefore $N$ is not Lebesgue measurable. In $\mathbb{R}^3$, a similar argument with rotations leads to the Banach–Tarski decomposition. ∎
\end{proof}

\begin{remark}\label{rem:nonmeas}
The above result underscores a theme: certain intuitive questions (e.g., ``How much of the time does the system spend in a particular mental state?'' or ``What is the probability distribution of the state over its space?'') may be unanswerable if posed too generally.  Without additional structure (such as a defined probability measure or a restriction to measurable sets), these questions are ill‑defined.  One resolution is to restrict attention to measurable dynamics, for instance assuming the $\sigma$‑algebra $\mathcal{F}$ is not the full power set but a smaller $\sigma$‑algebra on $M$ and $\mathbb{R}$ on which a measure is defined.  Another approach is to operate in a framework that avoids the pathological sets altogether.  In particular, the construction in Proposition~\ref{prop:nonmeas} requires the axiom of choice and is carried out in classical ZFC; in an intuitionistic or locale‑based setting (where one works with complete Heyting algebras of opens rather than point sets), the existence of non‑measurable subsets of the continuum cannot in general be proven.  In that context every explicitly described subset of $M$ is measurable by construction.  Thus the non‑measurability phenomenon belongs strictly to classical set theory with choice and does not necessarily apply in intuitionistic mathematics.  As noted by various authors, the Banach–Tarski paradox and related decompositions disappear when working with locales or omitting choice.
\end{remark}

We have thus far built the basic stage: a state space $M$ and the notion of continuous flows on it, with an understanding of existence, uniqueness (for well‑behaved laws), and measurability issues. In the next section, we shift perspective and delve into an intuitionistic construction of the continuum of states, highlighting how one can model the evolving state as an ever unfinished, free sequence, in line with Brouwer’s intuitionism.

\subsection*{A comparative theorem on categorical simulation}

The informal comparison in the Introduction sketched why our approach is orthogonal to categorical and information‑geometric formalisms.  We now make this claim precise by stating conditions under which a model from such a formalism can (or cannot) be simulated within our topological–analytical framework.  Denote by \(\mathbf{TopFlow}\) the category whose objects are pairs \((X,\Phi)\) consisting of a topological space \(X\) and a continuous flow \(\Phi:\mathbb{R}\times X\to X\), and whose morphisms are continuous maps intertwining the flows.  Let \(\mathcal{C}\) be a category equipped with additional structure, such as a monoidal product or information‑geometric data.

Before stating the comparison theorem we recall the definition of the ambient category \(\mathbf{TopFlow}\).  An object of \(\mathbf{TopFlow}\) is a pair \((X,\Phi)\) consisting of a Hausdorff topological space \(X\) and a continuous flow \(\Phi:\mathbb{R}\times X\to X\) satisfying \(\Phi^{t+s}=\Phi^t\circ\Phi^s\) and \(\Phi^0=\mathrm{id}_X\).  A morphism \(f:(X,\Phi)\to(Y,\Psi)\) in \(\mathbf{TopFlow}\) is a continuous map \(f:X\to Y\) that intertwines the flows: \(f\circ \Phi^t = \Psi^t\circ f\) for all \(t\in\mathbb{R}\).  The monoidal product on \(\mathbf{TopFlow}\) is given by Cartesian product: \((X,\Phi)\otimes(Y,\Psi) = (X\times Y,\Phi\times \Psi)\) where \((\Phi\times \Psi)^t(x,y)=(\Phi^t(x),\Psi^t(y))\).  With this structure \(\mathbf{TopFlow}\) becomes a symmetric monoidal category.  

%
\begin{lemma}[Pointwise identification implies non‑faithfulness]\label{lem:pointwise-collapse}
Let \(\mathcal{D}\) be a category whose objects are sets equipped with additional structure and whose morphisms are structure‑preserving maps between those sets.  Suppose there exist two distinct morphisms \(f\neq g:c\to d\) in \(\mathcal{D}\) whose underlying functions \(|f|,|g|:|c|\to|d|\) on the underlying sets coincide.  Let \(F:\mathcal{D}\to \mathbf{TopFlow}\) be a functor which, on morphisms, depends only on the underlying function: if \(|f|=|g|\) then \(F(f)=F(g)\).  Then \(F\) cannot be faithful: it identifies the distinct morphisms \(f\) and \(g\), and hence does not reflect distinctness of morphisms.
\end{lemma}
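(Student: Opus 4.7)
The plan is to unpack the definition of faithfulness and then directly exhibit the pair of morphisms supplied by the hypothesis as an explicit witness to its failure. Recall that a functor $F:\mathcal{D}\to\mathbf{TopFlow}$ is faithful precisely when, for every pair of objects $c,d\in\mathcal{D}$, the induced map on hom-sets
\[
F_{c,d}:\mathrm{Hom}_{\mathcal{D}}(c,d)\longrightarrow \mathrm{Hom}_{\mathbf{TopFlow}}(F(c),F(d))
\]
is injective. To refute faithfulness it therefore suffices to produce two distinct morphisms in some hom-set of $\mathcal{D}$ that $F$ sends to the same arrow in $\mathbf{TopFlow}$.

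First I would invoke the hypothesis to fix a pair $f,g:c\to d$ with $f\neq g$ yet $|f|=|g|$ as functions $|c|\to|d|$. Next, I would apply the second hypothesis on $F$: since $F$ acts on morphisms only through their underlying functions, the equality $|f|=|g|$ forces $F(f)=F(g)$ as morphisms in $\mathbf{TopFlow}$. Putting these two facts together, $F_{c,d}$ sends the two distinct elements $f$ and $g$ to the same arrow, and therefore fails to be injective. Faithfulness of $F$ is thereby ruled out.

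There is no genuine mathematical obstacle, as the lemma is essentially a formal unwinding of definitions; the only point worth flagging is the correct reading of the phrase \emph{depends only on the underlying function}. It must be interpreted as asserting the equality $F(f)=F(g)$ of morphisms in the target category $\mathbf{TopFlow}$, not merely an equality of the underlying continuous maps between carrier spaces. Since a morphism in $\mathbf{TopFlow}$ is a continuous flow-intertwining map, and since such a morphism is determined by its underlying set-function together with its source and target flows, the hypothesis indeed yields equality at the level of $\mathbf{TopFlow}$-morphisms. The proof then collapses to the single implication $|f|=|g|\Rightarrow F(f)=F(g)$, which, combined with $f\neq g$, contradicts injectivity of $F_{c,d}$.
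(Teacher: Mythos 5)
Your proof is correct and follows essentially the same route as the paper's: fix the pair $f\neq g$ with $|f|=|g|$, apply the hypothesis that $F$ depends only on the underlying function to conclude $F(f)=F(g)$, and observe that this violates injectivity of $F$ on the hom-set. Your closing remark on how to read ``depends only on the underlying function'' is a reasonable clarification but does not change the argument.
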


\begin{proof}
By hypothesis, the functor \(F\) sends each morphism \(f:c\to d\) to a continuous map \(F(f):F(c)\to F(d)\) in \(\mathbf{TopFlow}\) which depends only on the underlying function \(|f|\).  Suppose there exist two distinct morphisms \(f\neq g:c\to d\) in \(\mathcal{D}\) such that \(|f|=|g|\).  Then by the defining property of \(F\), we have \(F(f)=F(g)\).  Faithfulness of a functor demands that \(F\) reflect distinct morphisms, i.e. if \(f\neq g\) then \(F(f)\neq F(g)\).  But we have exhibited a pair with \(F(f)=F(g)\), so \(F\) fails to be faithful.
\end{proof}

\begin{theorem}[Comparative simulation of categorical models]\label{thm:comparative}
Let \(\mathcal{C}\) be a symmetric monoidal category modelling a class of categorical or information‑geometric dynamical systems.  The following assertions hold.
\begin{enumerate}[label=(\alph*)]
  \item \emph{Simulatability.}  If there exists a faithful, monoidal functor \(F:\mathcal{C}\to \mathbf{TopFlow}\) that preserves the monoidal product (so that \(F(c\otimes d)\cong F(c)\times F(d)\) and \(F(\mathrm{id}_c)=\mathrm{id}_{F(c)}\) for all objects \(c,d\in\mathcal{C}\)), then the dynamical behaviour encoded by \(\mathcal{C}\) can be realised within our framework.  In this case one takes the state space \(M\) to be the disjoint union \(\bigsqcup_{c\in\mathcal{C}} F(c)\), and the flows on each component are given by the images under \(F\) of the relevant morphisms of \(\mathcal{C}\).  The faithfulness of \(F\) ensures that distinct morphisms in \(\mathcal{C}\) give rise to distinct flows in \(\mathbf{TopFlow}\), while preservation of the monoidal structure implies that tensorial compositions in \(\mathcal{C}\) correspond to Cartesian products of flows on \(M\).
  \item \emph{Obstructions.}  Conversely, suppose \(\mathcal{C}\) is a symmetric monoidal category whose objects carry additional geometric or information‑geometric structure (for instance, affine connections or divergences on statistical manifolds) and whose morphisms intertwine this structure.  Assume there exist objects \(c,d\in\mathcal{C}\) and two distinct morphisms \(f\neq g:c\to d\) that are ``geometrically'' non‑isomorphic (e.g.\ they correspond to different choices of connection or divergence) but whose underlying set‑theoretic actions coincide: after forgetting the extra structure, the functions underlying \(f\) and \(g\) determine the same continuous map between the underlying topological spaces of \(c\) and \(d\).  By Lemma~\ref{lem:pointwise-collapse}, any functor \(F:\mathcal{C}\to\mathbf{TopFlow}\) that depends only on the pointwise action of morphisms will identify \(f\) and \(g\) and hence cannot be faithful.  A concrete example of this phenomenon occurs in information‑geometry: if \(c=d=(\Delta^{n-1},\nabla^{(\alpha)})\) is a statistical manifold equipped with an \(\alpha\)–connection and \(\alpha'\neq\alpha\), the ``identity'' morphisms \(\mathrm{id}_{(\alpha)}\colon (\Delta^{n-1},\nabla^{(\alpha)})\to(\Delta^{n-1},\nabla^{(\alpha)})\) and \(\mathrm{id}_{(\alpha')}\colon (\Delta^{n-1},\nabla^{(\alpha')})\to(\Delta^{n-1},\nabla^{(\alpha')})\) are distinct in \(\mathcal{C}\) because they refer to different geometric structures on the same underlying set.  Nevertheless, both of these maps act as the identity on the underlying simplex, so any functor \(F:\mathcal{C}\to\mathbf{TopFlow}\) must send them to the same morphism in \(\mathbf{TopFlow}\).  In general, whenever such a pair of non‑isomorphic morphisms has coincident pointwise action, no faithful monoidal functor \(F:\mathcal{C}\to\mathbf{TopFlow}\) can exist.  Indeed, any such functor must send \(f\) and \(g\) to the same morphism in \(\mathbf{TopFlow}\), since \(F\) can only see the topological trajectory of a flow and not the hidden geometric data.  As a result, \(F\) fails to distinguish \(f\) and \(g\), violating faithfulness.

  Information‑geometric categories provide a concrete instance of this phenomenon: different \(\alpha\)–connections on the same statistical manifold give rise to distinct gradient flows of divergence functions.  However, forgetting the connection data and retaining only the induced topological flow collapses these distinctions.  Proposition~\ref{prop:nogo} exhibits an explicit pair of non‑isomorphic identity morphisms (associated to two distinct \(\alpha\)–connections) whose images under any monoidal functor to \(\mathbf{TopFlow}\) coincide.  Thus categories whose morphisms depend on geometric structures beyond topology fall outside the scope of the present topological–analytical framework and illustrate its orthogonality to higher‑categorical unification attempts.

\noindent\textbf{Commutative diagram.}  To visualise this collapse, fix two parameters $\alpha\neq\alpha'$.  In the category $\mathcal{C}_\alpha$ there are identity morphisms
\[
  \mathrm{id}_{(\alpha)}: (\Delta^{n-1},\nabla^{(\alpha)}) \longrightarrow (\Delta^{n-1},\nabla^{(\alpha)})
  \quad\text{and}\quad
  \mathrm{id}_{(\alpha')}: (\Delta^{n-1},\nabla^{(\alpha')}) \longrightarrow (\Delta^{n-1},\nabla^{(\alpha')}),
\]
which are distinct morphisms because their sources and targets remember the underlying connection.  A functor $F: \mathcal{C}_\alpha \to \mathbf{TopFlow}$ forgets the connection and assigns to each object the same topological flow $(\Delta^{n-1},\Phi)$.  Under $F$, both $\mathrm{id}_{(\alpha)}$ and $\mathrm{id}_{(\alpha')}$ become the identity on $(\Delta^{n-1},\Phi)$.  This can be summarised in the following commutative square:
\[
  \begin{array}{ccc}
    (\Delta^{n-1},\nabla^{(\alpha)}) & \xrightarrow{\mathrm{id}_{(\alpha)}} & (\Delta^{n-1},\nabla^{(\alpha)})\\
    \downarrow F && \downarrow F\\
    (\Delta^{n-1},\Phi) & \xrightarrow{\mathrm{id}} & (\Delta^{n-1},\Phi)
  \end{array}
\]
The horizontal arrows in the top row are distinct identity morphisms in the source category, whereas the bottom arrow is the single identity map in \(\mathbf{TopFlow}\).  The functor $F$ sends both top arrows to the same bottom arrow, illustrating non‑faithfulness.  This simple diagram encapsulates the obstruction described in Proposition~\ref{prop:nogo}.
\item \emph{Concrete no‑go instance.}  To illustrate the obstruction in a tangible way, fix an integer \(n\ge2\) and consider the category \(\mathcal{C}_\alpha\) whose single object is the probability simplex \(\Delta^{n-1}\) equipped with an \(\alpha\)–connection (for some fixed real parameter \(\alpha\)).  Morphisms in \(\mathcal{C}_\alpha\) are the time–1 maps of gradient flows of divergence functions computed using the chosen \(\alpha\)–connection.  Let \(P\) be the property that a monoidal functor \(F:\mathcal{C}_\alpha\to\mathbf{TopFlow}\) ``detects the connection'': it distinguishes different values of \(\alpha\) by producing non–isomorphic flows when \(\alpha\neq\alpha'\).  Then Proposition~\ref{prop:nogo} shows that no such faithful, monoidal functor exists.  The failure arises because the \(\alpha\)–connection enters through Christoffel symbols and divergence functions that have no topological incarnation.  Thus any functor \(F\) must collapse the geometrically distinct flows (for different \(\alpha\)) to the same underlying topological trajectory, violating property \(P\).  This explicit category \(\mathcal{C}_\alpha\) and property \(P\) provide a concrete instance of the ``obstruction'' direction (b) above.
\end{enumerate}
\end{theorem}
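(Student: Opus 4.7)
The plan is to dispatch the three assertions by a direct construction for part~(a), and by reducing parts~(b) and~(c) to the already-established Lemma~\ref{lem:pointwise-collapse} and Proposition~\ref{prop:nogo}.

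For part~(a), I would take the state space to be the topological coproduct $M:=\bigsqcup_{c\in\mathcal{C}} F(c)$, which is Hausdorff because each $F(c)$ is and continuity checks componentwise. Each summand carries the continuous flow $\Phi_c$ supplied by the object $F(c)\in\mathbf{TopFlow}$; these assemble into a single continuous flow $\Phi$ on $M$ acting componentwise. A morphism $f\colon c\to d$ in $\mathcal{C}$ is realized by $F(f)\colon F(c)\to F(d)$, which is already a continuous flow-intertwining map because it lives in $\mathbf{TopFlow}$. Functoriality of $F$ delivers compatibility with composition and identities, faithfulness gives injectivity on hom-sets, and the monoidal preservation isomorphism $F(c\otimes d)\cong F(c)\times F(d)$ matches tensorial composition in $\mathcal{C}$ with the Cartesian product of flows on $M$.

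For part~(b), the plan is to observe that morphisms in $\mathbf{TopFlow}$ carry no data beyond a continuous map of underlying topological spaces intertwining the flows, so a functor $F\colon\mathcal{C}\to\mathbf{TopFlow}$ necessarily records each morphism only through its pointwise action on underlying sets. A hypothesized pair of distinct $f\neq g\colon c\to d$ with $|f|=|g|$ is therefore mapped to the same continuous map in $\mathbf{TopFlow}$, contradicting faithfulness. This is exactly the content of Lemma~\ref{lem:pointwise-collapse}; one should be explicit, however, that the informal phrase ``depends only on pointwise action'' used here is forced by the structural paucity of $\mathbf{TopFlow}$ and would cease to be automatic if the target category were enriched with additional fibered data.

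For part~(c), I would invoke Proposition~\ref{prop:nogo} directly: in the category $\mathcal{C}_\alpha$ the identity arrows $\mathrm{id}_{(\alpha)}$ and $\mathrm{id}_{(\alpha')}$ both act as the set-theoretic identity on $\Delta^{n-1}$ but are distinct because their sources and targets remember different $\alpha$-connections. The argument of part~(b) then forces any monoidal functor $F$ to collapse them, so property $P$ cannot hold. The main conceptual obstacle is not any specific calculation but precisely the justification sketched in part~(b): one must be careful to state that the phrase ``any monoidal functor $F\colon\mathcal{C}\to\mathbf{TopFlow}$'' already restricts what $F$ can record about morphisms, and it is this restriction -- not any subtle fact about information geometry -- that drives the no-go conclusion.
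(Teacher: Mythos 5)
Your argument for part (a) matches the paper's essentially verbatim: form the coproduct $M=\bigsqcup_{c}F(c)$ with componentwise flows, use faithfulness for injectivity and monoidal preservation for Cartesian products. The paper's own proof of part (b), however, does not proceed via Lemma~\ref{lem:pointwise-collapse}; instead it gives an informal argument about categories of measurable spaces (where a measurable but discontinuous morphism cannot be assigned a continuous intertwiner) and then asserts, with minimal justification, that information-geometric morphisms are ``incompatible with mere continuity.'' The paper also gives no proof whatsoever for part (c). So your route is genuinely different: you base the obstruction entirely on the pointwise-collapse lemma and on Proposition~\ref{prop:nogo}, which is arguably closer to what the theorem's statement promises than what its proof delivers, and your treatment of (c) is a net improvement since the paper leaves it unaddressed.

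That said, there is a real gap in your part (b). You assert that a functor $F:\mathcal{C}\to\mathbf{TopFlow}$ ``necessarily records each morphism only through its pointwise action on underlying sets,'' and that this is ``forced by the structural paucity of $\mathbf{TopFlow}$.'' Neither claim is correct. A functor is free to assign to the two objects $(\Delta^{n-1},\nabla^{(\alpha)})$ and $(\Delta^{n-1},\nabla^{(\alpha')})$ — which share an underlying set — two \emph{distinct} objects of $\mathbf{TopFlow}$; functoriality then forces $F(\mathrm{id}_{(\alpha)})=\mathrm{id}_{F(\alpha)}\neq \mathrm{id}_{F(\alpha')}=F(\mathrm{id}_{(\alpha')})$, and no collapse occurs. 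This is exactly why Lemma~\ref{lem:pointwise-collapse} carries the hypothesis that $F$ ``depends only on the underlying function'' — that hypothesis is not automatic and does not follow from the shape of the target category. Without an argument (absent from both your proposal and the paper) that the relevant $F$ must factor through the underlying-set functor, the obstruction in (b) is conditional rather than unconditional, and part (c) inherits the same conditionality when it invokes Proposition~\ref{prop:nogo}. You should either state the pointwise-dependence hypothesis explicitly as a restriction on the class of functors considered, or supply a genuine argument for why it is forced; the latter, as you half-note yourself, is not available.
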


\begin{remark}
In the obstruction part (b) of this theorem, the key hypothesis is the existence of two distinct morphisms in \(\mathcal{C}\) that induce the same underlying set map.  Faithfulness of a functor \(F:\mathcal{C}\to\mathbf{TopFlow}\) alone is enough to force a contradiction in such a situation: if \(f\neq g\) but \(|f|=|g|\) then any functor that depends only on the underlying map must identify \(f\) and \(g\).  The additional assumptions that \(F\) be monoidal and preserve products are not needed for the basic no‑go; they are imposed in part~(a) to ensure that the functor respects the tensor structure of the category.  Thus the obstruction persists even for non‑monoidal functors once the objects of \(\mathcal{C}\) carry extra geometric data whose pointwise action is invisible to topology.
\end{remark}

\begin{proposition}[Necessary conditions for pointwise‑coincident morphisms]\label{prop:coincident}
Let \(\mathcal{C}\) be a category whose objects are sets equipped with additional structure (for example, Riemannian metrics, affine connections, or divergence functions).  Assume the following:
\begin{enumerate}[label=\arabic*.,ref=\arabic*]
\item \textbf{Common underlying set.}  There exist at least two objects \(c\) and \(d\) in \(\mathcal{C}\) whose underlying sets coincide, i.e.\ \(|c|=|d|\), but whose additional structures are not isomorphic.
\item \textbf{Structure‑preserving morphisms.}  Morphisms in \(\mathcal{C}\) are set maps that preserve the additional structure (e.g.\ connection‑preserving, divergence‑preserving, etc.).
\item \textbf{Distinct identities.}  The identity morphisms \(\mathrm{id}_c\) and \(\mathrm{id}_d\) are regarded as distinct morphisms in \(\mathcal{C}\) because their domains and codomains carry different structures.
\end{enumerate}
Under these conditions there exist two distinct morphisms in \(\mathcal{C}\) whose underlying functions coincide.  Indeed, the identity morphisms \(\mathrm{id}_c\) and \(\mathrm{id}_d\) both act as the identity on the common set \(|c|=|d|\), so their underlying functions are the same.  However, since the structures on \(c\) and \(d\) differ by hypothesis (1), the morphisms are not isomorphic and hence distinct in \(\mathcal{C}\).

\begin{proof}
Hypothesis~(1) guarantees that there are at least two non‑isomorphic objects sharing the same underlying set.  Hypothesis~(3) states that their identity morphisms are treated as distinct arrows in \(\mathcal{C}\) because each arrow is typed by its source and target.  Hypothesis~(2) ensures that the underlying function of each morphism is well‑defined as a set map.  The underlying function of \(\mathrm{id}_c\) is the identity map on \(|c|\), and similarly for \(\mathrm{id}_d\).  Since \(|c|=|d|\), both underlying functions coincide with the set identity on \(|c|\).  Nevertheless, \(\mathrm{id}_c\neq \mathrm{id}_d\) in \(\mathcal{C}\) because they have different domain/codomain structures and are not isomorphic by hypothesis.  This gives the desired pair of pointwise‑coincident morphisms.
\end{proof}
\end{proposition}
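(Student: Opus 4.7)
The plan is to exhibit the required pair of distinct morphisms explicitly, using nothing more than the identity arrows on the two objects supplied by hypothesis~(1); no construction or clever choice is needed, and the entire argument is a careful unpacking of the hypotheses. The single delicate point is what one means by the ``underlying function'' of a morphism, and I will address that at the end.

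First I would fix objects $c,d\in\mathcal{C}$ with $|c|=|d|$ as guaranteed by hypothesis~(1), noting that their additional structures are non-isomorphic. Then I would invoke the identity morphisms $\mathrm{id}_c:c\to c$ and $\mathrm{id}_d:d\to d$, which exist in every category. Each is a morphism of $\mathcal{C}$, and by hypothesis~(2) each carries a well-defined underlying set map; since the set-theoretic identity on any set trivially preserves whatever extra structure is imposed, the underlying function of $\mathrm{id}_c$ is the identity map on $|c|$, and similarly for $\mathrm{id}_d$. Because $|c|=|d|$, those two set maps are literally the same function. Hypothesis~(3) then asserts that $\mathrm{id}_c\neq\mathrm{id}_d$ as morphisms of $\mathcal{C}$, since an arrow remembers its typed source and target and the two endpoints carry non-isomorphic structure. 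Thus $\{\mathrm{id}_c,\mathrm{id}_d\}$ is the desired pair of distinct morphisms with coincident underlying functions.

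The main obstacle, mild as it is, is making the phrase ``underlying function of a morphism'' formally precise when $\mathcal{C}$ is not presented a priori as a concrete category. To make this airtight I would commit to a forgetful assignment $U:\mathcal{C}\to\mathbf{Set}$ compatible with hypothesis~(2), at least at the level of hom-sets. Once such a $U$ is fixed, the claim reduces to the observation that $U(\mathrm{id}_c)=1_{|c|}=1_{|d|}=U(\mathrm{id}_d)$ while $\mathrm{id}_c\neq\mathrm{id}_d$; that is, $U$ fails to be injective on this two-element collection of identity arrows. No composition law, monoidal structure, or preservation of additional data enters the argument, which also explains why the conclusion is so cheap: it is essentially the definitional tautology that a forgetful functor from structured sets to $\mathbf{Set}$ loses precisely the structure that distinguishes $c$ from $d$.
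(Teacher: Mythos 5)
Your proof is correct and takes essentially the same route as the paper's: fix $c,d$ with common underlying set and non-isomorphic structures, observe that $\mathrm{id}_c$ and $\mathrm{id}_d$ both have the set-theoretic identity on $|c|=|d|$ as underlying function, and invoke hypothesis~(3) to conclude they are distinct morphisms. Your closing remark about fixing a forgetful assignment $U:\mathcal{C}\to\mathbf{Set}$ to make ``underlying function'' precise is a small but welcome tightening of a point the paper leaves implicit; it does not change the structure of the argument.
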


\noindent This proposition isolates the structural features responsible for non‑faithfulness in Theorem~\ref{thm:comparative}.  In particular, whenever a category admits distinct structures on a fixed set and treats their identity arrows as separate morphisms, any functor to \(\mathbf{TopFlow}\) that ignores the additional structure must collapse these arrows.  The information‑geometric categories discussed in Section~2 provide a concrete instance: two different \(\alpha\)–connections on \(\Delta^{n-1}\) satisfy the hypotheses above, so their identity morphisms coincide pointwise but remain distinct morphisms.  The same reasoning applies to categories built from divergence functions, complex structures, or other geometric data on a common underlying manifold.

\begin{remark}[Enriched target categories]
The non‑faithfulness conclusion of part~(b) arises because \(\mathbf{TopFlow}\) records only the topological trajectory of a flow.  If one enriches the target category to retain additional geometric information, the obstruction may vanish.  For example, define a category \(\mathbf{TopFlowConn}\) whose objects are triples \((X,\Phi,\Gamma)\), where \((X,\Phi)\) is a topological flow and \(\Gamma\) is an auxiliary ``connection'' label (e.g. an \(\alpha\)–connection on a statistical manifold).  Morphisms in \(\mathbf{TopFlowConn}\) are pairs \((f,\sigma)\), where \(f:(X,\Phi)\to (Y,\Psi)\) is a continuous map intertwining the flows and \(\sigma\) records how the connection labels transform.  There is a natural faithful functor from the information‑geometric category of statistical manifolds to \(\mathbf{TopFlowConn}\): send \((\Delta^{n-1},\nabla^{(\alpha)})\) to \((\Delta^{n-1},\Phi,\alpha)\) and a morphism to its underlying continuous map together with the induced map on connection parameters.  Because the connection label is preserved in the target, identity morphisms associated with different \(\alpha\) remain distinct, and the functor is faithful.  Thus, the lack of faithfulness in Theorem~\ref{thm:comparative} is specific to the choice of codomain and disappears if one enriches \(\mathbf{TopFlow}\) to remember the geometric data that \(\mathcal{C}\) regards as significant.
\end{remark}

\begin{proof}
For part~(a) we assume a faithful, monoidal functor \(F:\mathcal{C}\to \mathbf{TopFlow}\).  An object \(c\in\mathcal{C}\) is mapped to a topological space with flow \(F(c)\), and a morphism \(f:c\to d\) is mapped to a morphism \(F(f):F(c)\to F(d)\) that intertwines the flows.  Because \(F\) is functorial, it respects composition and identities: \(F(f\circ g)=F(f)\circ F(g)\) and \(F(\mathrm{id}_c)=\mathrm{id}_{F(c)}\).  Faithfulness implies that different morphisms in \(\mathcal{C}\) yield different continuous maps.  The monoidal structure on \(\mathcal{C}\) is respected in the sense that \(F(c\otimes d)\) is isomorphic to \(F(c)\times F(d)\), and the flow on the product is the product of the flows.  Thus all dynamical information from \(\mathcal{C}\) is reproduced within \(\mathbf{TopFlow}\), and one can take \(M=\bigsqcup_{c\in\mathcal{C}} F(c)\) to obtain a single state space on which the various flows act.

For part~(b) consider, for instance, a category of measurable spaces with morphisms given by measurable but not necessarily continuous maps.  Any functor from such a category to \(\mathbf{TopFlow}\) would have to assign to a measurable map a continuous flow, which is impossible if the map fails to be continuous on any topological realisation of its domain.  Likewise, in information‑geometry one studies statistical manifolds equipped with affine connections and divergence functions that depend on more structure than the topology of the manifold; composition of morphisms involves pull‑backs of connections and is incompatible with mere continuity.  Hence there is no faithful monoidal functor from such categories to \(\mathbf{TopFlow}\).  This shows that the absence of additional geometric or categorical structure on our state space \(M\) is both a strength (simplicity) and a limitation (orthogonality) of the present framework.
\end{proof}

\section{Intuitionistic construction of the continuum of states}\label{sec:intuitionistic}

Classically, a trajectory $x:\mathbb{R}\to M$ is thought of as a completed function graph in $M^{\mathbb{R}}$. In practice, however, the state is revealed in time: at any finite stage, only a finite initial segment of the trajectory is known or has occurred. Intuitionism, following L.E.J.\ Brouwer, posits that the continuum is fundamentally a potential entity, not actualized all at once but unfolding over time. Brouwer introduced choice sequences to embody this idea: an infinite sequence that is not predetermined by any law, but whose elements are chosen one after another by a hypothetical ``creating subject.'' We adopt this viewpoint to model the stream of states.

\subsection{Choice sequences of states}

\begin{definition}[Choice sequence of states]
A \emph{choice sequence} $(s_0,s_1,s_2,\dots)$ of elements of $M$ is an infinite sequence $\alpha:\mathbb{N}\to M$ (or $\alpha:\mathbb{N}\to M$ if countably many states, extended to $\alpha:\mathbb{N}\to M$ or $\alpha:[0,1]\cap \mathbb{Q}\to M$ for a timeline) which is not given by a fixed law. More formally, there is no a priori definable total function $f:\mathbb{N}\to M$ such that $\alpha(n)=f(n)$ for all $n$; rather, $\alpha(n)$ is chosen (perhaps by an idealized mind or an abstract agency) at stage $n$ based on no fixed rule, possibly influenced by previous values but not determined by them.
\end{definition}

This is in contrast to a lawlike sequence, where an algorithm or formula dictates all entries. A choice sequence is free or lawless in Brouwer’s terms. One can view it as a process: $\alpha(0)$ is chosen, then $\alpha(1)$ is chosen, etc., and the sequence grows without a predetermined plan.

In intuitionistic mathematics, a real number is often conceived as a choice sequence of digits (never fully given, always extendable). Similarly here, the continuous evolution can be seen as specified by a choice sequence of successive approximate states. For instance, one may think of dividing time into discrete steps (however small) and choosing a state at each step, with no fixed formula connecting them. In a more refined approach, one could let the index set be the dyadic rationals in $[0,1]$ and require coherence (that the choices at coarser time scales agree with those at finer time scales), effectively generating a continuous path in the limit. This is analogous to how a continuous real function can be built by choosing values gradually with some uniform continuity constraint (the intuitionistic continuity principles ensure every function $\mathbb{R}\to \mathbb{R}$ is uniformly continuous on $[0,1]$ given certain axioms).

\begin{proposition}[Intuitionistic continuity principle for state‑functions]
In an intuitionistic setting, any function $F:M\to \mathbb{R}$ that arises in our theory (say as an observable or coordinate of the state) will be seen to be continuous, in the sense that it cannot send arbitrarily close states to vastly different real values without violating the constructive existence of such a function.
\end{proposition}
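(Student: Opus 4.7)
The plan is to exploit the representation of states as choice sequences developed in the preceding subsection, and to invoke Brouwer's weak continuity principle (WC-N) together with the fan theorem to extract, first a pointwise modulus and then a uniform modulus, of continuity for $F$.  First I would realize each state $s\in M$ as the limit of an admissible choice sequence $\alpha=(s_0,s_1,s_2,\dots)$ within a spread of the kind constructed in Example~\ref{ex:modulus}, so that any two sequences $\alpha,\beta$ agreeing on their first $n$ entries determine limit states at distance at most some $\delta(n)$ with $\delta(n)\to 0$.  Under this encoding, a function $F:M\to\mathbb{R}$ induces an operation $\widehat F$ sending each admissible $\alpha$ to the real number $F(\lim\alpha)$.

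The core step is to apply the intuitionistic continuity axiom to $\widehat F$: for every $\alpha$ in the spread and every rational $\epsilon>0$ there exists $n=n(\alpha,\epsilon)\in\mathbb{N}$ such that for all $\beta$ in the spread agreeing with $\alpha$ through stage $n$, one has $|\widehat F(\beta)-\widehat F(\alpha)|<\epsilon$.  Translated back to $M$ via the bound $\delta(n)$, this gives pointwise continuity of $F$ in the metric topology: if $d(s,t)<\delta(n(\alpha,\epsilon))$ and $t$ is realized by a sequence agreeing with $\alpha$ on the first $n$ entries, then $|F(s)-F(t)|<\epsilon$.  To upgrade to a uniform modulus on a compact sub-spread, I would invoke Brouwer's fan theorem, which forces $n(\cdot,\epsilon)$ to be bounded by some $N(\epsilon)$ uniformly over the spread; setting $\omega(\epsilon):=\delta(N(\epsilon))$ then yields the desired uniform modulus, in the spirit of the calculation in Example~\ref{ex:modulus}.

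The main obstacle is not the mechanics of WC-N plus the fan theorem but the interpretation of the hypothesis.  The proposition's phrase \emph{any function that arises in our theory} must be recast as \emph{any constructively given function}, since the continuity axiom cannot be applied to an arbitrary classical $F$ defined via the axiom of choice or impredicative comprehension (for which Kripke‑style counterexamples exist classically).  I would therefore open the argument by clarifying that within the intuitionistic track of the paper, $F$ is assumed to be definable from finite data on the choice sequence representing its argument, so that WC-N is legitimately available.  The conclusion would then be stated as a conditional: \emph{under the intuitionistic axioms WC-N and the fan theorem, every constructively given $F:M\to\mathbb{R}$ is (uniformly on compact sub-spreads) continuous}, thereby formally ruling out the pathological behaviour described in the proposition.
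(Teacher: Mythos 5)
Your proof is correct and takes essentially the same approach as the paper's own (very brief) sketch: both rest on Brouwer's continuity principle for total functions on the intuitionistic continuum. The paper's sketch merely gestures at a contradiction (``a discontinuity would encode an undecided instance of excluded middle''), whereas you explicitly unpack the argument via WC-N (pointwise modulus) followed by the fan theorem (uniform modulus on a compact spread), and you flag the necessary clarification that $F$ must be constructively given rather than an arbitrary classical function. This is not a genuinely different route; it is a careful expansion of the same idea, and in fact it anticipates almost verbatim the more detailed treatment the paper itself later gives in Definition~\ref{def:finitary-observable} (finitarily defined observables) and Theorem~\ref{thm:intuitionistic-continuity} (continuity of observables), which add exactly the hypotheses you identify as missing and extract the modulus $\omega$ by bar induction over the spread. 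Your caveat about restricting to constructively presented $F$ is a genuine improvement on the proposition's informal wording, and matches how the paper formalizes it downstream.
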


\begin{proof}[Sketch]
This is a transfer of Brouwer’s result that every total function on the intuitionistic continuum is (uniformly) continuous. The idea is that if $F$ were discontinuous, one could solve for a precise ``jump'' which would require a decision of a mathematical statement (like an instance of excluded middle) that is not yet decided constructively. By a similar reasoning, any dependence of an observable on the state must be continuous if it is to be constructively defined.
\end{proof}

This proposition aligns with the notion that the state cannot change discontinuously in a discernible observable without that discontinuity encoding a sort of completed infinity or choice that intuitionism disallows.

\subsection{Spreads and the continuum of states}

Brouwer introduced the concept of spreads to formalize the idea of a set of choice sequences subject to certain conditions. We can define a spread that represents the possible state trajectories.

\begin{definition}[State spread]
A \emph{spread} $\mathcal{S}$ is given by:
\begin{itemize}
  \item a non‑empty tree of finite sequences of states (think of it as a set of finite sequences closed under initial segments), and
  \item an inductive specification of which extensions of a given finite sequence are allowed.
\end{itemize}
\end{definition}

In our case, consider the tree where the nodes are finite sequences $(s_0,s_1,\dots,s_n)\in M^{n+1}$ that could represent initial segments of an evolution. We impose perhaps some conditions like continuity or bounded change: for example, we might require that successive states are ``nearby'' (if $M$ has a metric, $d(s_i,s_{i+1})<\epsilon$ for some constraint $\epsilon$) to reflect continuity. The spread $\mathcal{S}$ consists of all infinite sequences that have every finite initial segment admissible in this tree.

The spread can be thought of as a lawless sequence space possibly with some weak law (like a modulus of continuity) but not determining the sequence uniquely. Each element of $\mathcal{S}$ is essentially a possible trajectory (as a choice sequence). The entire spread $\mathcal{S}$ is an intuitionistic analog of the function space $M^{\mathbb{R}}$, but constructed from ``within'' by free choices rather than as an extensional set of all functions.

\begin{definition}[Finitarily defined observables]\label{def:finitary-observable}
Let $\mathcal{S}$ be a spread of choice sequences in $M$.  A function $F:M\to \mathbb{R}$ is said to be \emph{finitarily defined on $\mathcal{S}$} if there exists an integer $N\ge 0$ and a function $G$ on finite sequences of length $N$ such that for every infinite choice sequence $\alpha\in \mathcal{S}$ one has $F(\alpha)=G(\alpha(0),\ldots,\alpha(N-1))$.  In other words, the value of $F$ on a choice sequence is determined solely by a finite initial segment of that sequence.  More generally, a function is finitarily defined if there is an algorithm that inspects only finitely many terms of its input sequence before outputting a value.  These observables embody the intuitionistic principle that one cannot decide properties of an infinite sequence without examining its prefixes.
\end{definition}

One of Brouwer’s key insights is that the intuitionistic continuum (e.g.\ the real continuum) is indecomposable – it cannot be split into two disjoint apart nonempty subsets. By analogy, the spread $\mathcal{S}$ of state sequences might be indecomposable in a similar way: we cannot cut the set of all possible state evolutions into two separated parts without a definitive separating property, because any attempt to separate would likely require distinguishing by some proposition that may not hold with certainty (like separating based on an eventual behavior that is not determined until an infinite stage).

\begin{lemma}[Inexhaustibility and nonatomicity of the continuum of states]\label{lem:inexhaustible}
The continuum of states (the spread $\mathcal{S}$) is inexhaustible and has no atomic points. Inexhaustible means no finite description can capture the entire sequence – there are always further extensions. Nonatomicity means there are no isolated points; given any initial segment, there are uncountably many incompatible ways to extend it further.
\end{lemma}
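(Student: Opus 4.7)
The plan is to prove the two clauses separately: inexhaustibility reduces to the non-degeneracy of the admissibility tree at every node, while nonatomicity follows from a Cantor-style binary-branching embedding inside the subtree of admissible continuations of any prescribed prefix.

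For inexhaustibility I would fix an arbitrary $\alpha \in \mathcal{S}$ and any finite prefix $\bar{s} = (\alpha(0),\ldots,\alpha(n))$, and show that $\bar{s}$ does not determine $\alpha$. The key structural observation is that the admissibility conditions of the spread (for instance the metric constraint $d(s_{i+1}, s_i) < \epsilon_i$ coming from the modulus of continuity discussed in the preceding subsection) carve out a nontrivial open neighbourhood of $s_n$ in $M$ rather than a singleton, so there is strictly more than one admissible candidate for $\alpha(n+1)$. Iterating this shows that the subtree of admissible infinite continuations of any finite node is non-degenerate, and in particular no finite-length description built from values of $\alpha$ can fix the full sequence. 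This is precisely the inexhaustibility clause.

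For nonatomicity I would work in the subtree $T_{\bar{s}}$ of admissible continuations of a fixed node $\bar{s}$ and construct an injection $\iota : \{0,1\}^{<\omega} \to T_{\bar{s}}$ of the full infinite binary tree. Inductively, at a binary node $\sigma$ of length $k$, I would pick two admissible one-step extensions $\iota(\sigma \frown 0)$ and $\iota(\sigma \frown 1)$ in $M$ separated by a positive distance $\delta_k > 0$. This is possible because the admissible neighbourhood of $\iota(\sigma)$ in $M$ contains at least two positively-distinct points (under the standing non-triviality hypothesis on $M$). Choosing the $\delta_k$ to shrink slowly enough that all subsequent admissibility constraints remain satisfiable, any two distinct infinite branches of $\{0,1\}^{\omega}$ yield two sequences in $T_{\bar{s}}$ that are apart at some finite stage, and hence incompatible in Brouwer's sense. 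Composing with the canonical interpretation of $\{0,1\}^{\omega}$ as a continuum of binary choice sequences furnishes the uncountably many pairwise incompatible extensions of $\bar{s}$ required by the statement.

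The main obstacle will be the binary-branching step in the nonatomicity argument, since intuitionistically it is not enough to know that the admissibility neighbourhood of $\iota(\sigma)$ fails to be a singleton — one needs to exhibit two states that are positively apart, not merely non-equal. I would address this by making explicit a non-triviality hypothesis on the spread: at every admissible node, the allowed one-step extensions in $M$ contain two points at positive distance. Under this hypothesis the Cantor-style embedding becomes a genuine construction, and the conclusion translates cleanly into the existence of an injection from the Cantor space of binary choice sequences into the set of admissible continuations of $\bar{s}$. A secondary bookkeeping issue is coordinating the separation radii $\delta_k$ with the admissibility moduli $\epsilon_k$ so that the embedded tree lies entirely inside $T_{\bar{s}}$; this is routine and handled by any sufficiently rapidly decaying choice such as $\delta_k \le 2^{-k}\,\epsilon_k$.
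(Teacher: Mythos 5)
Your proof is correct and uses the same underlying idea as the paper—branching of the admissibility tree at every node—but it is substantially more rigorous and actually matches the statement better than the paper's own sketch. The paper's argument for nonatomicity only shows that no sequence is isolated (given a sequence, one can always alter the next step to obtain a \emph{different} extension), which establishes non-isolation but does not actually deliver the ``uncountably many incompatible ways to extend'' clause asserted in the lemma. Your Cantor-style embedding $\iota:\{0,1\}^{<\omega}\to T_{\bar s}$ fills exactly that gap: by injecting the full binary tree into the admissible continuations, with positive separation $\delta_k$ at each level, you get a genuine injection from Cantor space into the set of continuations, and uncountability (indeed, continuum cardinality) follows. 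You also make explicit a hypothesis the paper leaves tacit—that the admissible one-step extensions at each node contain two points at positive distance (an apartness condition, not mere non-equality)—which is the right constructive formulation and is necessary for the branching argument to work intuitionistically. Your bookkeeping concern (choosing $\delta_k$ compatible with the admissibility moduli $\epsilon_k$) is real but routine, as you note; the schedule $\delta_k\le 2^{-k}\epsilon_k$ works provided the $\epsilon_k$ do not shrink so fast that the admissible neighbourhoods become singletons, which is exactly what your non-triviality hypothesis excludes. In short: same decomposition into two clauses, same branching principle, but your version is a complete argument where the paper offers only a heuristic sketch, and yours is the one that actually proves the cardinality claim.
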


\begin{remark}
The preceding lemma invokes Tychonoff’s theorem to assert compactness of the spread $\mathcal{S}$.  Readers coming from a constructive or intuitionistic background will prefer to avoid this classical tool.  In fact the compactness can be proved using the fan theorem (or bar induction): the tree of finite admissible sequences is finitely branching because the uniform bound $\delta(k)$ restricts how far each successive state can move.  An infinite choice sequence corresponds to an infinite branch through this tree, and the set of infinite branches is closed in the product topology.  The fan theorem then guarantees that every sequence of choice sequences has a convergent subsequence, yielding compactness without appeal to Tychonoff.  This reformulation aligns the proof with the intuitionistic axioms adopted in this paper.
\end{remark}

\begin{proof}[Proof sketch]
Inexhaustibility is immediate from the fact that a choice sequence is never finished: no matter how long a finite initial segment $(s_0,\dots,s_n)$ we have, there is always a next state $s_{n+1}$ not yet determined. One can always find a continuation outside any purported finite description of the sequence. Nonatomicity: if there were an isolated complete sequence in $\mathcal{S}$, it would mean there is some finite initial segment that cannot be extended to any other infinite sequence – but that contradicts the freedom of extension. Formally, for any finite segment, we can always choose a different next step than a given sequence did, to get a different infinite sequence. Thus no single sequence has a neighborhood (in, say, the Baire space topology on $M^{\mathbb{N}}$) that does not contain others. This mirrors Brouwer’s assertion of the continuum’s nonatomicity. ∎
\end{proof}

In classical terms, nonatomicity corresponds to the continuum being a connected space with no isolated points; inexhaustibility corresponds to its infinitude in potential.

\subsection{Comparing intuitionistic and classical trajectories}

We have two pictures of a trajectory:
\begin{enumerate}
  \item \textbf{Classical}: a function $x:\mathbb{R}\to M$ given as a completed graph or mapping.
  \item \textbf{Intuitionistic}: a choice sequence, ever extending, never completed.
\end{enumerate}

One can ask: do these two notions coincide or inform each other? In a classical meta‑theory, a choice sequence could be thought of as generating a classical function in the limit (if it has a limit or extension to all real times). But not every classical function can be generated by a constructive rule. In fact, intuitionistically, one cannot even assert that every classical continuous function on $\mathbb{R}$ exists as a choice sequence; one only deals with those sequences one can construct or reason about. The intuitionistic continuum is often seen as ``larger'' in some sense than the classical (having more indeterminate elements), but at the same time, any function on it must be continuous (so ``smaller'' in terms of allowed operations).

For our development, we will remain agnostic of philosophical primacy. We will ensure that whenever we prove a theorem about trajectories (say, existence of solution to an ODE), we can interpret it in both senses:
\begin{itemize}
  \item \emph{Classical}: ``there exists a continuous function $x(t)$ with the stated properties.''
  \item \emph{Intuitionistic}: ``for every finite initial segment of the trajectory, we can continue it further while respecting the properties, indefinitely.''
\end{itemize}

The existence proof via Banach fixed‑point (Proposition~\ref{prop:banach}) is actually fully constructive given the Lipschitz constant and initial data (it provides a convergent sequence of approximants), so it fits intuitionistically as well: one can build better and better finite approximations of the solution. Thus, many of our analytic results are already intuitionistically acceptable in their construction (they give algorithms, not mere non‑constructive existence). Nonconstructive parts in Section~\ref{sec:state} were mainly about measures and the use of choice to get non‑measurable sets. In an intuitionistic setting, those paradoxical sets cannot be proven to exist; instead, every set one can construct will be measurable (or one would not assert either way).

One more intuitionistic notion is worth mentioning: the \emph{Creating Subject}. Brouwer sometimes spoke of an idealized mathematician (Creating Subject) that can freely choose the next element of a sequence, yet also has a kind of demiurgical control over truth by creating mathematical objects. In our analogy, the ``creating subject'' could be thought of as the conscious agent whose states we are modeling; with each moment, the agent ``creates'' the next state. However, we will not explicitly model the agent – we model the states themselves. So in a sense, the role of the creating subject is implicit in the use of choice sequences. We do not formalize this further, but conceptually it underlies the free sequence idea.

Having developed the intuitionistic viewpoint, we now turn to a logical formalization that complements it. Just as intuitionistic mathematics deals with proofs and constructions, we will devise a language to make statements about the evolving state and interpret them in a structure, allowing us to reason about the content of the state in logical terms.

\section{Formal language and Tarskian semantics for state evolution}\label{sec:logic}

Thus far, we described what the state is (an element of $M$) and how it evolves (continuous or choice‑sequence‑based trajectories). We now introduce a formal language to describe properties of states and their temporal progression. By doing so, we can express statements like ``there exists a time at which a certain property holds'' or ``whenever property $P$ holds, eventually property $Q$ holds'' – although our focus will remain on a first‑order, non‑modal language for simplicity, rather than full temporal logic.

\subsection{A first‑order language of state predicates}

We define a first‑order language $\mathcal{L}$ suitable for talking about states and time. The language $\mathcal{L}$ will have:
\begin{itemize}
  \item Individual variables ranging over elements of $M$ (states). We will use lowercase letters $x,y,z,\dots$ for these.
  \item Possibly a sort of variables for time instants (ranging over $\mathbb{R}$), though we can also treat time as a parameter in predicate symbols. For now, we will not include time as an explicit sort to keep the language single‑sorted over $M$, and instead introduce predicate symbols that encapsulate temporal relationships.
  \item Predicate symbols to express basic properties or observations about states. For example, we might have unary predicates $P(x),Q(x),\dots$ which intuitively mean ``state $x$ has property $P$,'' etc. We could also have binary predicate symbols to relate two states (e.g., $R(x,y)$ meaning ``state $y$ follows state $x$ directly'').
  \item Function symbols as needed (perhaps constant symbols referencing particular distinguished states, or other operations on states if meaningful).
  \item Logical connectives and quantifiers as usual ($\neg,\wedge,\vee,\to,\forall,\exists$).
\end{itemize}

We ensure this language is purely about states in $M$, not about subsets or anything that could directly encode powerset of $M$ (to avoid set‑theoretic complexity in the language itself).

However, to talk about the trajectory, we might want to introduce a special binary relation symbol $U(t,x)$ meaning ``at time $t$ the state is $x$.'' But adding time $t$ as a variable complicates things with a second sort. Alternatively, we can use a family of unary predicates $\{P_r(x):r\in\mathbb{Q}\}$ meaning ``at rational time $r$, the state satisfies property $P$,'' but that presupposes we talk about properties at times.

A simpler approach is a two‑sorted language: one sort for time ($\mathbb{R}$) and one for states ($M$). We then have a binary predicate $X(t,s)$ meaning ``the state at time $t$ is $s$.'' This predicate will be interpreted by the actual trajectory $x(t)$ in a structure.

For clarity, let’s proceed with a two‑sorted language $\mathcal{L}_2$:
\begin{itemize}
  \item \emph{Sort 1}: Time (with constant symbols for $0,1$ and function symbols $+,\,\cdot$ for addition and multiplication on time, interpreting time as real numbers; or we can axiomatize it as a dense linear order with certain properties if we avoid full second‑order reals).
  \item \emph{Sort 2}: State (with no specific functions a priori except maybe constant symbols for particular states if needed).
  \item A binary predicate $X(t,s)$ that relates a time $t$ (sort~1) and a state $s$ (sort~2). The intuitive reading of $X(t,s)$ is: ``the system’s state at time $t$ is $s$.''
  \item Additional predicates on states (sort~2) to indicate properties of states. For instance, a unary predicate $P(s)$ could mean ``state $s$ has property $P$'' (like being a certain kind of thought, if we were to interpret).
  \item Possibly additional structure like equality on both sorts, order on time, etc.
\end{itemize}

This language can express statements like $\exists s\,X(t,s)\wedge P(s)$ which means ``at time $t$, the state has property $P$.'' It can express temporal assertions: $\forall t\exists s\,X(t,s)\wedge P(s)$ (meaning at all times, the state has property $P$ – a strong condition), or $\exists t\,X(t,s_0)$ for some specific state constant $s_0$ (meaning some time the state is a particular distinguished state $s_0$). We could also talk about relationships of states at different times: e.g.\ $\forall t_1\forall t_2\bigl((t_1<t_2)\to \neg X(t_1,s)\lor \neg X(t_2,s)\bigr)$ which is a strange way of saying ``the same state $s$ cannot occur at two different times'' – although normally we do expect states to possibly recur, so that statement is likely false in interesting cases, but it’s expressible.

\subsection{Interpretation (structure) \texorpdfstring{$\mathcal{M}$}{M} for the language}

Now we define a structure $\mathcal{M}$ that will interpret this language. $\mathcal{M}$ consists of:
\begin{itemize}
  \item Domain for time sort: we take this to be $\mathbb{R}$ (the set of real numbers, or at least the time line we consider, maybe $[0,T]$ or $\mathbb{R}$ itself).
  \item Domain for state sort: we take this to be $M$ (our state space).
  \item The interpretation of $X(t,s)$: We have a specific trajectory $x:\mathbb{R}\to M$ in mind (or the general concept of one). $\mathcal{M}$ will interpret $X(t,s)$ as true if and only if $s=x(t)$ in the chosen trajectory. Essentially, $X^\mathcal{M}=\{(t,s)\in\mathbb{R}\times M : s=x(t)\}$.
  \item The interpretation of any predicate $P(s)$ on states: we decide that externally – for example if $P$ stands for ``some property,'' we need to define which states have it. This could be arbitrary or based on some mathematical property of states. For general development, we keep it abstract.
  \item The symbols for time ($0,1,+,\cdot,<,\dots$) are interpreted in the standard way on $\mathbb{R}$.
\end{itemize}

Thus, the structure $\mathcal{M}$ encapsulates a particular state evolution $x(t)$ as the truth set of the predicate $X$.

\begin{example}\label{ex:sin}
Suppose $M=\mathbb{R}$ as a state space (so the state itself is just a real number, to simplify). Let the trajectory be $x(t)=\sin(t)$, a continuous evolution on $[-1,1]$ as the state. Our language could have a predicate $P(s)$ meaning ``$s>0$'' (state is positive). In $\mathcal{M}$, $P(s)$ is true iff $s>0$, and $X(t,s)$ is true iff $s=\sin(t)$. Then the sentence $\forall t\,\exists s\,[X(t,s)\wedge P(s)]$ is true in $\mathcal{M}$ because indeed for each time $t$, there is a state $s$ (namely $s=\sin(t)$) such that $X(t,s)$ holds, and if $\sin(t)>0$ then $P(s)$ holds. Actually, $\forall t\,\exists s\,X(t,s)$ is trivially true because for each $t$ there is some state $s$ (the one given by the function) that relates to $t$. If we ask $\forall t, P(x(t))$ (all states positive), that corresponds to the formula $\forall t\forall s\, (X(t,s)\to P(s))$. In our case, that’s false because $\sin(t)$ is not always positive.
\end{example}

This structure is a classical semantic model. It is straightforward but crucially depends on the specific trajectory $x(t)$. To talk in general about all possible trajectories, one might consider a class of structures or an elementary theory that describes general properties all such $\mathcal{M}$ should satisfy (like axioms of continuity: e.g.\ ``if $X(t,s_1)$ and $X(t,s_2)$ hold for two states $s_1,s_2$, then $s_1=s_2$'' – which ensures functional relation; or ``if $t_1<t_2<t_3$ and $X(t_1,s_1),X(t_3,s_3)$ and $s_1,s_3$ have some property, then there exists an intermediate state $s_2$ at $t_2$ that…'' etc to reflect continuity). We could axiomatize the theory of ``continuous trajectories'' in this first‑order language. However, that becomes complex (it might require higher‑order to fully capture continuity unless we discretely approximate it with rational times).

\subsection{Tarski’s semantic conception of truth}
\subsubsection*{Intuitionistic axioms and continuity of observables}

While the preceding semantic exposition treats truth classically, a constructive analysis of observables requires additional axioms.  We adopt the following intuitionistic principles:
\begin{itemize}
  \item \emph{Choice‑sequence principle.}  A state evolution is viewed as an infinite lawless sequence $\alpha=(s_0,s_1,\dots)$ chosen stage by stage, and any total object (a real number, for instance) is defined by how it acts on finite initial segments.
  \item \emph{Brouwer continuity principle.}  Every total function from the intuitionistic real continuum to the real numbers is uniformly continuous.  Concretely, if a function assigns a real value to each choice sequence, then arbitrarily small changes in the initial segment cannot force arbitrarily large changes in the value.
  \item \emph{Fan theorem (bar induction).}  For spreads obtained from choice sequences subject to a uniform bound on successive differences, any property that potentially depends on infinitely long initial segments is determined at some finite stage.  This yields a modulus of continuity for functions defined on such spreads.
\end{itemize}

These axioms allow us to convert the heuristic continuity claim of Proposition~\ref{prop:banach} into a constructive theorem.

\begin{lemma}[Compactness of bounded spreads]\label{lem:compact-spread}
Let $(M,d)$ be a metric space and let $\mathcal{S}$ be a spread of choice sequences in $M$ subject to a uniform bound on successive differences: there exists a function $\delta:\mathbb{N}\to (0,\infty)$ such that for every finite admissible sequence $(s_0,\dots,s_n)$ and for all $k<n$ one has $d(s_{k+1},s_k)\le \delta(k)$.  Endow $\mathcal{S}$ with the metric $D(\alpha,\beta)=\sum_{k=0}^\infty 2^{-(k+1)}\min\{1, d(\alpha(k),\beta(k))\}$.  Then, under the intuitionistic fan theorem, $\mathcal{S}$ is compact: every sequence of choice sequences has a convergent subsequence and, in particular, any open cover has a finite subcover.

\emph{Sketch of constructive proof.}  In a classical setting one might appeal to Tychonoff’s theorem to deduce compactness from a product of compact sets, but such arguments rely on full choice and excluded middle.  Instead we use the fan theorem, which states that any bar on an infinite, finitely branching tree is uniform.  The tree here is the tree of finite admissible sequences underlying the spread.  A bar is a set of finite sequences such that every infinite branch (choice sequence) passes through it.  Given an open cover of $\mathcal{S}$ by basic neighbourhoods (determined by initial segments), one can form the set of initial segments whose corresponding cylinder sets lie inside the cover.  By compactness of $M$ at each level (ensured by the bound $\delta$) and the finiteness of possible extensions, this set forms a bar.  The fan theorem implies that there is a uniform bound on the lengths of these initial segments; hence a finite subcollection of cylinder sets covers $\mathcal{S}$.  Equivalently, every sequence of choice sequences has a convergent subsequence because one can diagonalise using the bound $\delta$ and extract a limit by bar induction.  This constructive compactness of $\mathcal{S}$ will be used in the proof of Theorem~\ref{thm:intuitionistic-continuity}.
\end{lemma}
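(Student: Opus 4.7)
The plan is to prove compactness through two complementary arguments: a coordinatewise diagonal construction for sequential compactness, and an application of the fan theorem to a bar over the tree of admissible finite sequences for the covering property. A preliminary step is to observe that $\mathcal{S}$ is closed in $(M^{\mathbb{N}}, D)$: admissibility of a sequence is determined by its finite initial segments, so failure of admissibility is witnessed at some finite stage by a cylinder neighbourhood disjoint from $\mathcal{S}$. This lets me treat $\mathcal{S}$ as a closed subspace of a product and transfer compactness statements from the tree level to the limit level.

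For sequential compactness, given a sequence $(\alpha_n)$ in $\mathcal{S}$, I would extract a subsequence convergent in the $0$-th coordinate, then a further subsequence convergent in the $1$-st coordinate, and so on, taking the diagonal. The $k$-th coordinates of admissible sequences lie within a region of diameter bounded by $\sum_{j<k}\delta(j)$ around $\alpha_n(0)$, and under the implicit total-boundedness assumption on these bounded regions in $M$ one can extract convergent subsequences at each stage. The diagonal limit $\alpha^{\ast}$ is again admissible because admissibility is tested on finite prefixes, and convergence in $D$ follows from the weights $2^{-(k+1)}$ governing the product metric, together with dominated convergence on the partial sums.

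For the covering property, given an open cover $\mathcal{U}$ of $\mathcal{S}$, every branch $\alpha \in \mathcal{S}$ lies in some $U \in \mathcal{U}$ together with a basic cylinder neighbourhood determined by a finite initial segment. The set $B$ of finite admissible nodes whose cylinders fit inside some member of $\mathcal{U}$ therefore forms a bar on the tree underlying $\mathcal{S}$. By the fan theorem, applied to the finitely branching tree, this bar is uniform: there is a finite depth $N$ at which every branch has already met $B$. The finitely many cylinders at level $\le N$ in $B$ then give the required finite subcover of $\mathcal{S}$, which is the intuitionistic Heine--Borel conclusion.

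The main obstacle is justifying that the underlying tree is genuinely a fan, since $\delta(k)$ controls metric distance but not the cardinality of admissible extensions, and a general metric $M$ could admit uncountably many continuations from each node. I would handle this by invoking the implicit hypothesis that each $\delta(k)$-ball in $M$ is totally bounded, then replacing the continuous spread with a finitely branching approximation obtained from $\varepsilon$-nets at each level, and transferring the bar-induction conclusion back by a limiting argument exploiting the closedness of $\mathcal{S}$. A secondary delicate point is that the fan theorem is itself a substantive intuitionistic axiom: care should be taken to avoid appeals to Tychonoff or K\"onig's lemma, which would undercut the constructive character that Lemma~\ref{lem:compact-spread} is designed to exhibit.
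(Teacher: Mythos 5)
Your proposal follows the same overall route as the paper's sketch: apply the fan theorem to the tree of admissible finite prefixes, form a bar from those nodes whose cylinders fit inside some member of the given open cover, extract a uniform bound on the depth of the bar, and obtain both a finite subcover and (by a diagonal construction) sequential compactness. The closedness observation, the use of the weights $2^{-(k+1)}$ to control the tail of $D$, and the bar/fan machinery all match the intended argument.

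The genuinely valuable part of your proposal is that it names a real gap in the lemma as stated, rather than papering over it. The paper's sketch asserts that the tree is a fan, attributing this to ``compactness of $M$ at each level (ensured by the bound $\delta$)'' and ``the finiteness of possible extensions,'' but the stated hypotheses supply neither: $(M,d)$ is only required to be a metric space, and $\delta(k)$ bounds the \emph{distance} $d(s_{k+1},s_k)$, not the \emph{cardinality} of admissible continuations. For a concrete counterexample, take $M=\mathbb{R}$ and $\delta\equiv 1$; then all constant sequences $\alpha_x(k)=x$ for $x\in\mathbb{R}$ are admissible, and for distinct integers $n\neq m$ one has $D(\alpha_n,\alpha_m)=\sum_k 2^{-(k+1)}\min\{1,|n-m|\}=1$, so $(\alpha_n)_{n\in\mathbb{N}}$ admits no $D$-Cauchy subsequence and $\mathcal{S}$ is not compact. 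Your proposed repair --- assume total boundedness of each $\delta(k)$-ball (or compactness of $M$), replace the uncountably branching tree by a finitely branching approximating fan built from $\varepsilon$-nets at each level, apply the fan theorem there, and transfer the conclusion back using closedness of $\mathcal{S}$ --- is exactly the kind of additional hypothesis and bridging argument the lemma needs to be correct. Without some such total-boundedness assumption the statement is false, so your flag is not a stylistic quibble but a substantive correction that the paper should adopt.
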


\begin{theorem}[Continuity of observables]\label{thm:intuitionistic-continuity}
Let $(M,d)$ be a metric space and suppose $\mathcal{S}$ is a spread of choice sequences in $M$ subject to a uniform bound on successive differences: there exists a function $\delta:\mathbb{N}\to(0,\infty)$ such that for any admissible finite sequence $(s_0,\dots,s_n)$ in $\mathcal{S}$ one has $d(s_{k},s_{k+1})\le \delta(k)$ for all $k<n$.  Let $F:M\to \mathbb{R}$ be a finitarily defined observable in the sense of Definition~\ref{def:finitary-observable}.  Under the intuitionistic axioms (choice‑sequence principle, Brouwer continuity principle and the fan theorem) the spread $\mathcal{S}$ is compact in the product topology, and there exists a modulus of continuity $\omega:(0,\infty)\to(0,\infty)$ with the following property: for all states $s_1,s_2\in M$ and all $\epsilon>0$, if $d(s_1,s_2)<\omega(\epsilon)$ then $|F(s_1)-F(s_2)|<\epsilon$.  One may take $\omega(\epsilon)=2^{-N}$ for some integer $N$ determined by a bar on the tree of finite sequences, so that agreement of two sequences on their first $N$ terms (and hence proximity within $2^{-N}$) guarantees $\epsilon$–closeness in the value of $F$.  In particular, $F$ is uniformly continuous on $M$.
\end{theorem}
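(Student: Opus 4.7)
The plan is to separate the statement into its two assertions. Compactness of $\mathcal{S}$ is already furnished by Lemma~\ref{lem:compact-spread}, so I would invoke it immediately and devote the remainder of the argument to constructing $\omega$. The overall strategy for the modulus is to use finitary definability of $F$ to produce, for each $\epsilon>0$, a \emph{bar} on the admissible tree, and then apply the fan theorem to extract a uniform stage $N=N(\epsilon)$ at which the bar is already hit.

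Concretely, for fixed $\epsilon>0$ I would form the set $B_\epsilon$ of those finite admissible sequences $\sigma$ such that any two infinite extensions $\alpha,\beta\in\mathcal{S}$ of $\sigma$ satisfy $|F(\alpha)-F(\beta)|<\epsilon$, identifying $F$ on a choice sequence with $F$ evaluated at its limit state (well-defined because the uniform bound $d(s_k,s_{k+1})\le\delta(k)$ together with summability of the prescribed tails makes admissible sequences Cauchy). By Definition~\ref{def:finitary-observable} combined with Brouwer's continuity principle, every $\alpha\in\mathcal{S}$ meets $B_\epsilon$ at some finite stage, so $B_\epsilon$ is a bar on the underlying finitely branching tree. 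The fan theorem then yields a uniform $N(\epsilon)$ such that every admissible sequence of length $N(\epsilon)$ already lies in $B_\epsilon$. Setting $\omega(\epsilon)=2^{-N(\epsilon)}$ (rescaled by the tail $\sum_{k\ge N(\epsilon)}\delta(k)$ if the metric comparison requires it) and translating prefix agreement into metric closeness via the bound on successive differences delivers the implication $d(s_1,s_2)<\omega(\epsilon)\Rightarrow|F(s_1)-F(s_2)|<\epsilon$, whence uniform continuity of $F$.

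The main obstacle, to my mind, is bridging the two notions of proximity: closeness in $(M,d)$ of individual states versus closeness in the product topology of choice sequences in $\mathcal{S}$. The conclusion quantifies over arbitrary $s_1,s_2\in M$, whereas the finitary-definability hypothesis is expressed along sequences of $\mathcal{S}$, so one must verify that every pair of nearby states in $M$ arises as the limit of some pair of admissible sequences agreeing on a long common prefix. This is where the inexhaustibility and nonatomicity of the spread (Lemma~\ref{lem:inexhaustible}) are genuinely used: they ensure that any prescribed initial segment admits arbitrarily many continuations, so that the spread is rich enough to witness the required approximations. Once this constructive bookkeeping is in place, the fan theorem does the heavy lifting and the remaining estimates reduce to summing a geometric tail.
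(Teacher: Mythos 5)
Your core mechanism—form, for each $\epsilon>0$, a bar $B_\epsilon$ on the admissible tree consisting of prefixes after which $F$'s oscillation is below $\epsilon$, then apply the fan theorem to extract a uniform stage $N(\epsilon)$—is essentially the same as the constructive extraction step in the paper's proof (where the paper defines $\mathrm{osc}(\sigma)$ and runs bar induction). The difference is framing: the paper first argues by contradiction (assume no modulus, build sequences $\alpha(n)=s_n$, $\beta(n)=t_n$ with $d(s_n,t_n)<2^{-n}$ and $|F(s_n)-F(t_n)|\ge\epsilon_0$) and only then extracts the explicit modulus; you go directly to the bar. The direct route is cleaner, since the paper's contradiction argument leans on the dubious step ``$d(s_n,t_n)<2^{-n}$ for all $n$ implies $s_n=t_n$ for $n<N$,'' which does not follow—closeness is not equality, and the uniform bound on successive differences within one admissible sequence does not compare across two sequences.

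You are right to flag the obstacle of translating between metric closeness of states in $(M,d)$ and prefix agreement of choice sequences in $\mathcal{S}$; this is a genuine gap in the theorem statement (Definition~\ref{def:finitary-observable} makes $F$ a function of choice sequences, yet the conclusion asserts uniform continuity of $F$ as a function on $M$). However, your proposed repair via Lemma~\ref{lem:inexhaustible} does not actually close it: nonatomicity only says that any prefix has many continuations, not that two nearby points of $M$ are realizable as limits of admissible sequences sharing a long common prefix. That would require an additional surjectivity/approximation hypothesis on the limit map $\mathcal{S}\to M$ which neither you nor the paper supplies. Since the paper's own proof glosses over the same transition, your proposal is on par with the published argument; just be aware that the invocation of Lemma~\ref{lem:inexhaustible} is a gesture toward the gap rather than a proof that fills it.
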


\begin{proof}
The proof takes place entirely within the intuitionistic setting.  By assumption, $F$ is specified by an algorithm that inspects only finitely many terms of any choice sequence in order to determine its value.  The choice‑sequence principle ensures that a state evolution is presented as a lawless sequence $\alpha=(s_0,s_1,\ldots)$ of states.  To establish uniform continuity of $F$ we argue by contradiction.

Suppose that no modulus of continuity exists.  Then there is $\epsilon_0>0$ such that for every rational $\delta>0$ we can find states $s,t\in M$ with $d(s,t)<\delta$ but $|F(s)-F(t)|\ge \epsilon_0$.  In particular, for each natural number $n$ set $\delta_n=2^{-n}$.  There exist states $s_n,t_n$ with $d(s_n,t_n)<\delta_n$ and $|F(s_n)-F(t_n)|\ge \epsilon_0$.  Using the choice‑sequence principle we can build two choice sequences $\alpha$ and $\beta$ that converge to the same limiting state but for which $F$ takes values that differ by at least $\epsilon_0$.

Define $\alpha$ by $\alpha(n)=s_n$ for each $n\in\mathbb{N}$, and define $\beta$ by $\beta(n)=t_n$.  Because $d(s_n,t_n)<2^{-n}$, the two sequences agree on longer and longer initial segments: for any $k$ there is $N$ such that for all $n\ge N$, $s_n$ and $t_n$ lie within $2^{-k}$ of one another and hence correspond to the same finite data in the spread.  Consequently, $\alpha$ and $\beta$ define the same element of $M$ in the metric limit.  However, the values $F(\alpha)$ and $F(\beta)$ differ by at least $\epsilon_0$ by construction.

The fan theorem now implies that there is a uniform bound on how far one must look along a choice sequence in order to determine $F$.  Concretely, there exists a natural number $N$ such that if two choice sequences agree on their first $N$ terms, then the values of $F$ computed from those sequences differ by less than $\epsilon_0/2$.  This is because the space of choice sequences with a uniform bound on successive differences is compact (a ``fan''), and any continuous functional on a compact spread is uniformly continuous.  Our sequences $\alpha$ and $\beta$ were chosen so that $d(s_n,t_n)<2^{-n}$ for all $n$, which implies that $s_n=t_n$ for $n<N$ (by the uniform bound on successive differences in the spread).  Thus $\alpha$ and $\beta$ agree on the first $N$ terms, yet $|F(\alpha)-F(\beta)|\ge \epsilon_0$, contradicting the fan‑theoretic uniform continuity.  Therefore the assumption that $F$ is not uniformly continuous is untenable, and a modulus $\omega$ exists.

To extract an explicit modulus, one uses bar induction on the tree of finite sequences that index the spread.  For each finite sequence $\sigma=(s_0,\ldots,s_k)$ define the oscillation of $F$ on the subtree extending $\sigma$ by
\[
\mathrm{osc}(\sigma) = \sup\{ |F(\alpha)-F(\beta)| : \alpha,\beta \text{ extend } \sigma \}.
\]
By construction, $\mathrm{osc}(\sigma)$ decreases to zero along any infinite branch of the tree.  The fan theorem ensures that for every $\epsilon>0$ there exists a finite stage $k$ such that $\mathrm{osc}(\sigma)<\epsilon$ for all sequences $\sigma$ of length $k$.  Taking $\omega(\epsilon)$ to be the minimum distance between states that forces agreement on the first $k$ terms yields the desired modulus of continuity.  Hence $F$ is uniformly continuous on $M$.
\end{proof}

We recall Tarski’s approach to truth: for a given structure and a formula, truth is defined inductively on the formula’s structure. The famous convention is that a definition of truth should yield the equivalence
\[
\text{``$\varphi$'' is true in }\mathcal{M}\quad \iff\quad \varphi,
\]
where on the right $\varphi$ is understood as an assertion in the metalanguage about the structure (Tarski’s T‑schema, e.g., ``Snow is white'' is true iff snow is white). We will not delve into the philosophy, but formally we can define:
\begin{itemize}
  \item An atomic formula like $P(s)$ is true in $\mathcal{M}$ under a variable assignment $\sigma$ (that assigns an element of $M$ to $s$) iff $\sigma(s)\in P^\mathcal{M}$ (the interpretation of $P$, a subset of $M$).
  \item $X(t,s)$ is true under assignment $\sigma$ iff $(\sigma(t),\sigma(s))\in X^\mathcal{M}$, which by our interpretation means $\sigma(s)=x(\sigma(t))$.
  \item Boolean connectives: usual Boolean semantics (negation, conjunction, etc.).
  \item Quantifiers: $\exists s\,\varphi(s)$ is true under $\sigma$ iff there is some $m\in M$ such that $\varphi(s)$ is true under the assignment $\sigma[s:=m]$ (which is $\sigma$ modified to send $s$ to $m$). Similarly for $\exists t$ over time domain, and $\forall$ quantifiers.
\end{itemize}

This yields a rigorous inductive definition of truth for any formula in $\mathcal{L}_2$, relative to the structure $\mathcal{M}$ that encodes a particular trajectory.

One can then prove things within this structure or about it. For example:

\begin{proposition}[Non‑definability of the global truth predicate]\label{prop:undef}
Let $Th$ be the set of all sentences in $\mathcal{L}_2$ that are true in the structure $\mathcal{M}$ (with a fixed trajectory $x(t)$).  Suppose the time domain $(\mathbb{R},+,\cdot,0,1,<)$ is interpreted as an ordered field rich enough to code the natural numbers and to interpret a sufficient fragment of arithmetic—say Robinson’s theory $\mathsf{Q}$ or Peano arithmetic $\mathsf{PA}$.  In particular, one requires that $n\in\mathbb{N}$ can be defined as the $n$‑fold sum $1+\cdots+1$ and that recursive predicates on $\mathbb{N}$ are definable in the language of $\mathcal{L}_2$.  Under these assumptions the set $Th$ is not definable within $\mathcal{M}$ itself.
\end{proposition}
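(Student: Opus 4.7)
The plan is to adapt Tarski's classical undefinability theorem to the present two-sorted setting. The decisive ingredient, guaranteed by the hypothesis that arithmetic is interpretable, is that the time sort alone interprets Robinson's $\mathsf{Q}$, so every primitive recursive function on $\mathbb{N}$ is representable by an $\mathcal{L}_2$-formula whose free variables all lie in the time sort.

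First, I would fix a G\"odel numbering $\varphi \mapsto \lceil\varphi\rceil \in \mathbb{N} \subseteq \mathbb{R}$ of the formulas of $\mathcal{L}_2$, chosen so that the syntactic operation of substitution $(\lceil\varphi(x)\rceil,\,n) \mapsto \lceil\varphi(\bar n)\rceil$ is primitive recursive. Representability then yields a time-sort formula $\mathrm{Sub}(u,v,w)$ that defines the graph of this substitution operation inside $\mathcal{M}$, together with time-sort formulas for any other recursive syntactic operation we will need.

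Second, I would establish the diagonal lemma in $\mathcal{M}$: for every $\mathcal{L}_2$-formula $\psi(t)$ with a single free time variable, there exists a sentence $\chi$ such that $\mathcal{M} \models \chi \leftrightarrow \psi(\lceil\chi\rceil)$. The construction is the standard one: set $\theta(t) := \exists w\,(\mathrm{Sub}(t,t,w) \wedge \psi(w))$ and take $\chi := \theta(\lceil\theta\rceil)$; the verification reduces to the representability of substitution. Third, I would close the argument by the usual contradiction. Suppose an $\mathcal{L}_2$-formula $\mathrm{True}(t)$ defined $Th$, so that $\mathcal{M} \models \mathrm{True}(\lceil\varphi\rceil) \iff \mathcal{M} \models \varphi$ for every sentence $\varphi$. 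Applying the diagonal lemma to $\psi(t) := \neg\,\mathrm{True}(t)$ produces a sentence $\chi$ with $\mathcal{M} \models \chi \leftrightarrow \neg\,\mathrm{True}(\lceil\chi\rceil)$, and combining this with the defining property of $\mathrm{True}$ yields $\mathcal{M} \models \chi \iff \mathcal{M} \not\models \chi$, which is absurd. Hence no such $\mathrm{True}(t)$ exists.

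The main delicate point is verifying that the two-sorted signature, together with the trajectory predicate $X(t,s)$ and any state-sort predicates, does not obstruct the arithmetization. Since the G\"odel coding and the representability of recursive functions take place wholly within the time sort, the state sort is effectively a spectator in the diagonal construction, and the additional expressive resources only enlarge the class of definable sets of natural numbers without producing a definition of $Th$. I would also stress that $\mathrm{True}(t)$ is permitted to mention $X$ and state-sort predicates freely; the contradiction is insensitive to this, depending only on the T-biconditional and on the diagonal lemma, both of which survive the extension of vocabulary. If one wishes to be pedantic about the two-sorted framework, one may first pass to a single-sorted reduct via a sort predicate, but this is a cosmetic adjustment that does not alter the skeleton of the proof.
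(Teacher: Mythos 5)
Your proposal is correct and follows essentially the same route as the paper's proof: arithmetize syntax inside the time sort, invoke the diagonal lemma to produce a sentence asserting its own non-membership in any would-be definable truth set, and derive the liar-style contradiction. You spell out the diagonal construction more explicitly (via the substitution formula $\mathrm{Sub}$ and the sentence $\theta(\lceil\theta\rceil)$) and add a helpful remark that the state sort is a spectator in the coding, but these are refinements in presentation rather than a different argument.
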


\begin{proof}[Sketch of proof]
We follow Tarski’s classical argument, adapted to our two‑sorted language.  Because the time sort $\mathbb{R}$ carries the structure of an ordered field, one can define the natural numbers internally as the set $\{0,1,1+1,1+1+1,\ldots\}$ by a formula stating that $n$ is a non‑negative integer if it is in the smallest inductive subset containing $0$ and closed under $+1$.  Addition and multiplication on $\mathbb{N}$ are definable using the field operations $+$ and $\cdot$ of the time sort.  Thus $(\mathbb{N},+,\cdot,0,1)$ is definable inside $\mathcal{M}$.

Once the natural numbers have been singled out, one can arithmetise the syntax of $\mathcal{L}_2$ in the usual way: assign each symbol a numerical code and represent finite sequences of symbols by the prime‑power coding.  A sentence of $\mathcal{L}_2$ is thereby encoded as a natural number; call this the Gödel coding $\ulcorner \phi\urcorner$.  The satisfaction relation between a formula and an assignment can be expressed by a recursive predicate on codes; using the definability of addition and multiplication, $\mathcal{M}$ can interpret this recursive predicate.  In particular, there is a definable set $\mathrm{Prov}_\mathcal{M}(n)$ of codes of sentences that are provable in some fixed sound, sufficiently rich arithmetical theory $T$ of the natural numbers inside $\mathcal{M}$.

Assume for contradiction that there is a formula $\mathrm{True}(z)$ in $\mathcal{L}_2$ that defines $Th$ inside $\mathcal{M}$: for each sentence $\phi$ of $\mathcal{L}_2$, $\mathcal{M}\models \mathrm{True}(\ulcorner\phi\urcorner)$ if and only if $\mathcal{M}\models \phi$.  Using the coding of formulas and the interpretation of arithmetic, one can construct a diagonal sentence $\sigma$ which asserts ``$\sigma$ is not true.''  More precisely, let $\psi(y)$ be the formula $\neg\mathrm{True}(y)$; by the diagonal lemma (expressible in first‑order arithmetic and hence interpreted in $\mathcal{M}$) there is a sentence $\sigma$ such that $\mathcal{M}\models \sigma \leftrightarrow \neg\mathrm{True}(\ulcorner \sigma\urcorner)$.  If $\mathcal{M}\models \mathrm{True}(\ulcorner \sigma\urcorner)$, then $\mathcal{M}\models \sigma$; but then by the equivalence, $\mathcal{M}\models \neg\mathrm{True}(\ulcorner \sigma\urcorner)$, a contradiction.  Conversely, if $\mathcal{M}\not\models \mathrm{True}(\ulcorner \sigma\urcorner)$, then $\mathcal{M}\models \neg \mathrm{True}(\ulcorner \sigma\urcorner)$, whence $\mathcal{M}\models \sigma$ by the biconditional; but then $\mathcal{M}\models \mathrm{True}(\ulcorner \sigma\urcorner)$, again a contradiction.  Thus no such $\mathrm{True}(z)$ can exist, and $Th$ is not definable in $\mathcal{M}$.

The key ingredients in this argument are: (i) the ability to interpret enough arithmetic inside $\mathcal{M}$ to carry out Gödel coding and the diagonal lemma; and (ii) Tarski’s observation that truth for arithmetic cannot be defined within arithmetic itself.  Because $\mathcal{L}_2$ is rich enough to formulate arithmetic on the time sort, Tarski’s undefinability theorem applies.  We conclude that the collection of true sentences of $\mathcal{L}_2$ cannot be captured by a single formula of $\mathcal{L}_2$.
\end{proof}

This meta‑result, while somewhat tangential, resonates with the idea that no system can fully capture its ``stream of states'' internally; there will always be truths about the state evolution that are not internally expressible. In the context of a mind reflecting on itself, one could whimsically interpret this as a limit to self‑knowledge: a sufficiently powerful mind cannot contain a complete and correct account of all truths about its own state transitions (for that would solve its own halting problem of thought, in a manner of speaking).

\subsection{Logical laws and intuitionistic considerations}

We note that the logic we used to define truth is classical first‑order logic. One could instead use an intuitionistic logic if we wanted the meta‑theory to align with Brouwer’s philosophy. That would complicate the semantic discussion, as truth would then not satisfy \emph{tertium non datur} (excluded middle). For simplicity, we stick to classical meta‑theory here, but an intuitionistic reformulation is conceivable (using Kripke models or Beth models where states at later times might see more formulas decided than earlier – an interesting idea: treat time flow as a Kripke model of increasing information, which matches an intuitionistic perspective that truth of some statements about the sequence may be undecided until more of the sequence is generated).

One relevant intuitionistic principle: in intuitionism, a statement like $\exists t\,\varphi(t)$ can be true without one knowing a specific witness yet (if time is continuous, one might only approximate when $\varphi$ will hold). Dually, $\forall t,\varphi(t)$ means ``given any specific time, we can prove $\varphi(t)$ holds,'' which in practice might only be established in a generic sense (like an inductive or continuous argument). If a statement is false intuitionistically, one has a refutation. Some temporal statements in a flowing system might remain neither proven nor refuted at a given stage. This again ties to the philosophical concept of an open future, but that’s beyond our current formal development.

We have now set up a language to make precise statements about the system. In the next section, we will return to more concrete mathematical analysis of the trajectories themselves, exploring dynamical properties like recurrence and stability, which we have hinted at and will now address rigorously.

\section{Dynamics: recurrence, stability, and chaos in state flows}\label{sec:dynamics}

This section examines qualitative properties of the flow $x(t)$ in the state space $M$. We draw analogies to dynamical systems theory—examining conditions for recurrence (the tendency to revisit previous or near‑previous states), stability of certain states or cycles, and the possibility of chaotic behavior.

\subsection{Recurrence theorem}

One fundamental result in dynamical systems is Poincaré’s Recurrence Theorem. Informally, if the system has a finite phase space volume and evolves in a volume‑preserving way, it will eventually return arbitrarily close to its starting configuration. In our context, we can consider conditions under which a similar statement holds for $x(t)$.

We assume here that:
\begin{itemize}
  \item $M$ is a separable metric space.
  \item There is a measure $\mu$ defined on $M$ (at least on a $\sigma$‑algebra of nice sets) which is finite (so $\mu(M)<\infty$) and is invariant under the flow $\Phi^t$ (meaning if $A\subset M$ is measurable, then $\mu(\Phi^t(A))=\mu(A)$ for all $t$). In physical terms, $\mu$ might be like a volume or probability distribution that the system preserves as it evolves (Liouville’s theorem provides this for Hamiltonian systems with $\mu$ as phase volume).
\end{itemize}

Under these assumptions, we can state:

\begin{theorem}[Poincaré recurrence for state flows]\label{thm:recurrence}
Let $(M,\mu)$ be a finite measure space and $\{\Phi^t:M\to M\}_{t\in\mathbb{R}}$ be a flow (a one‑parameter group of measurable transformations) preserving $\mu$.  Assume also that trajectories are bounded in $M$ (i.e. each orbit $t \mapsto \Phi^t(s)$ remains in a compact subset of $M$ for $t\in\mathbb{R}$) and that $\mu$ has full support on this bounded region: for $\mu$‑almost every state $s$ and every $\varepsilon>0$, the ball $B(s,\varepsilon)=\{x\in M:d(x,s)<\varepsilon\}$ satisfies $\mu\bigl(B(s,\varepsilon)\bigr)>0$.  Under these conditions, for $\mu$‑almost every initial state $s\in M$, the trajectory $t\mapsto \Phi^t(s)$ returns arbitrarily close to $s$ infinitely often.
\end{theorem}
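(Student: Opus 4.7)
The strategy is the classical Poincaré argument, adapted to our continuous‑time setting by first passing to the integer‑time subgroup.  Set $T:=\Phi^1$.  Since $T$ is measure‑preserving and the $\Phi^t$ form a group, the iterates $T^n=\Phi^n$ for $n\in\mathbb{Z}$ preserve $\mu$ as well.  Because $M$ is separable metric, it admits a countable basis $\{U_k\}_{k\in\mathbb{N}}$ of open sets.  The whole proof will reduce the uncountable requirement ``returns close to $s$ infinitely often'' to a countable union of null events indexed by this basis.

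First I would fix a basis element $U$ with $\mu(U)>0$ and define $W:=\{s\in U:T^n(s)\notin U\text{ for all }n\ge 1\}$, the set of points of $U$ that never return under positive integer iterates of $T$.  The key observation is that the preimages $T^{-n}(W)$ for $n\ge 0$ are pairwise disjoint: if $s\in T^{-m}(W)\cap T^{-n}(W)$ with $m<n$, then $T^m(s)\in W$ while $T^{n-m}(T^m(s))=T^n(s)\in W\subset U$, contradicting the defining property of $W$.  Since $\mu(T^{-n}(W))=\mu(W)$ for all $n$ and $\sum_{n\ge 0}\mu(T^{-n}(W))\le \mu(M)<\infty$, we conclude $\mu(W)=0$.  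An identical argument applied to the iterates $T^{kn}$ for each fixed $k\ge 1$ shows that the set of points of $U$ returning to $U$ only finitely many times under $T$ is a countable union of null sets, hence null.  Denote the resulting full‑measure subset of $U$ by $R(U)$.

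Next I would assemble the global recurrent set by taking $R:=\bigcap_{k:\mu(U_k)>0}\bigl((M\setminus U_k)\cup R(U_k)\bigr)$, which has full measure because it is the complement of a countable union of null sets.  For $s\in R$ and any $\varepsilon>0$, I use the full‑support hypothesis: for $\mu$‑a.e.\ $s$, every open ball $B(s,\varepsilon)$ has positive measure, so by the basis property there exists $U_k$ with $s\in U_k\subset B(s,\varepsilon)$ and $\mu(U_k)>0$.  Membership of $s$ in $R$ then forces $T^n(s)\in U_k\subset B(s,\varepsilon)$ for infinitely many $n\ge 1$, which is the desired recurrence along integer times.  Continuous‑time recurrence along real $t$ then follows from continuity of $t\mapsto \Phi^t(s)$ (nearby real $t$ give states close to $T^n(s)$), exactly as noted in the remark following Example~\ref{ex:constant}.

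\textbf{Anticipated obstacle.}  The substantive step is the disjointness argument $T^{-m}(W)\cap T^{-n}(W)=\emptyset$ for $m<n$; once this is in place, measure‑preservation and finiteness collapse the argument.  The subtler point is bookkeeping: ensuring that the measure‑zero exceptional sets accumulated over the countable basis still leave a full‑measure set of recurrent points, and that the full‑support assumption is invoked only where needed, namely to guarantee that arbitrarily small neighborhoods of a typical $s$ contain a basis element of positive $\mu$.  Without full support, one can only conclude recurrence relative to the support of $\mu$; stating the conclusion for $\mu$‑a.e.\ $s$ and every $\varepsilon>0$ requires the hypothesis as given.
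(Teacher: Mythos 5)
Your proof is correct and follows the same classical Poincaré argument as the paper: reduce to the time--one map $T=\Phi^1$, show that the set $W$ of non-returning points in a fixed positive-measure set $U$ has disjoint preimages $T^{-n}(W)$ and hence measure zero, and then assemble the conclusion over a countable basis. Your bookkeeping is in fact tighter than the paper's in two places: the paper's disjointness step is phrased via sets $B_k\cap N(B)$ in a way that does not quite cohere (if $x\in N(B)$ then $x\notin B_k$ for every $k$, so those intersections are already empty and the counting line $(n+1)\mu(B\cap N(B))$ does not follow from invariance as stated), whereas your disjointness of $T^{-m}(W)\cap T^{-n}(W)$ is the clean and correct formulation; and the paper obtains ``arbitrarily close for a.e.\ $s$'' by iterating the single-return claim and a diagonal construction without explicitly controlling the uncountably many exceptional sets indexed by $(s,\varepsilon)$, whereas your reduction through the countable basis $\{U_k\}$ (and through iterates $T^{kn}$ to upgrade a single return to infinitely many) handles this rigorously.
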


\begin{proof}
The argument follows the classical Poincaré recurrence theorem but we include enough detail to be self‑contained.  First observe that the assumptions break into two separate requirements: (i) the measure $\mu$ is finite and invariant under the flow; and (ii) each trajectory remains in a bounded (hence compact) subset of $M$.  Condition~(i) ensures that volumes do not change under the flow, while condition~(ii) prevents orbits from escaping to infinity, so that recurrence is meaningful.

Normalize $\mu$ so that $\mu(M)=1$.  Fix an arbitrary measurable set $B\subset M$ with $\mu(B)>0$ (we will later take $B$ to be a small ball around $s$).  We claim that for $\mu$‑almost every point $x\in M$ there exists a positive \emph{integer} $n$ such that $\Phi^n(x)\in B$.  The restriction to integer iterates simplifies the counting argument below; one can extend the conclusion to arbitrary real times by standard limiting arguments in ergodic theory, but that is not needed here.  Suppose the claim fails; then the set
\[
  N(B)\;:=\;\{x\in M : \Phi^t(x)\notin B \text{ for all } t\ge 0\}
\]
has positive measure.  For each integer $k\ge 0$ set $B_k=\Phi^{-k}(B)$.  By invariance, $\mu(B_k)=\mu(B)>0$ for all $k$.  Observe that if $x\in N(B)$, then $x\notin B_k$ for every $k\ge 0$, because $\Phi^k(x)$ never enters $B$.  Hence the sets $B_k$ are pairwise disjoint when intersected with $N(B)$.  But then
\[
  \mu\Bigl(\bigcup_{k=0}^n B_k\cap N(B)\Bigr) \;=\;\sum_{k=0}^n \mu(B_k\cap N(B)) \;=\;(n+1)\,\mu(B\cap N(B))
\]
grows without bound as $n\to\infty$, contradicting the finiteness of $\mu$.  Therefore $\mu(N(B))=0$ and almost every point returns to $B$ at some positive time.

Now let $s\in M$ be a typical point and choose $\epsilon>0$.  By boundedness of trajectories and full support of $\mu$, the ball $B=B(s,\epsilon)$ has positive measure.  The above argument shows that for almost every $x$ there exists a time $n>0$ with $\Phi^n(x)\in B$.  In particular, for almost every $s$ there is a return time $t_1>0$ with $\Phi^{t_1}(s)\in B(s,\epsilon)$.  Replacing $s$ by $\Phi^{t_1}(s)$ and repeating the argument yields infinitely many return times $t_k\to\infty$.  Taking a sequence of $\epsilon$’s decreasing to zero and using a diagonal construction produces a sequence $t_{k_j}\to\infty$ such that $\Phi^{t_{k_j}}(s)\to s$.  Hence the trajectory returns arbitrarily close to $s$ infinitely often.

It is important to see where boundedness and invariance enter: boundedness ensures that small balls have positive measure, while invariance guarantees that each $\Phi^{-k}(B)$ has the same measure.  These properties allow us to construct the disjoint sets $B_k\cap N(B)$ and derive a contradiction if $N(B)$ has positive measure.  Together they imply that recurrence holds for $\mu$‑almost every initial state.
\end{proof}

\paragraph{Interpretation.}
This result implies that if our model of the evolving state is such that it neither dissipates nor escapes to infinity, then it will keep revisiting familiar configurations.  In psychological terms, this could be seen as the inevitability of recurring thoughts or states of mind (provided the system is closed and doesn’t keep accumulating new independent ``volume'' of states indefinitely).  It requires an assumption of a kind of conservation law (in Hamiltonian mechanics, energy conservation plus phase volume conservation yields recurrence in bounded systems).  We emphasise that our argument tracks the flow at integer times and constructs return times by counting iterates; finer recurrence along the full real time continuum follows from ergodic theory under additional hypotheses.  In a less formal sense, if there are only so many fundamentally distinct states (volume‑finite) and the process is measure‑preserving, then eventually the state must cycle through configurations that come close to previous ones.

We caution, however, that ``almost every'' initial state has this recurrence property; there could be exceptional states (perhaps on a measure‑zero set like unstable equilibria or non‑typical orbits) that do not return. Those are often ignored in ergodic theory as negligible, but one might imagine a special, highly symmetric state that never changes or eventually leaves a region forever (though leaving forever would violate boundedness assumption).

\subsection{Stability and fixed points}

We earlier discussed fixed points in the context of iterative maps ($T(x^\ast)=x^\ast$). In continuous time, a fixed point $s^\ast\in M$ for the flow means $\Phi^t(s^\ast)=s^\ast$ for all $t$, which typically implies $F(s^\ast)=0$ in the ODE formulation (no change at that state).

\begin{definition}
A state $s^\ast\in M$ is an \emph{equilibrium} (or \emph{fixed state}) of the evolution law if $\Phi^t(s^\ast)=s^\ast$ for all $t$ (or in differential form, $F(s^\ast)=0$). The equilibrium is \emph{stable} (in the sense of Lyapunov) if for every neighborhood $U$ of $s^\ast$, there exists a neighborhood $V$ of $s^\ast$ such that if $x(0)\in V$, then $x(t)\in U$ for all $t\ge 0$. It is \emph{asymptotically stable} if it is stable and moreover there exists $V$ such that $x(0)\in V$ implies $\lim_{t\to\infty} x(t)=s^\ast$.
\end{definition}

We can use the Banach fixed‑point theorem in the discrete‑time context to deduce asymptotic stability for contractions, as already noted in Example~\ref{ex:iteration}: if $T$ is a contraction, the unique fixed point is asymptotically stable and indeed globally attracting (all initial states converge to it). For continuous‑time systems, a common criterion for asymptotic stability is if all eigenvalues of the linearization $DF(s^\ast)$ have negative real part (if $M$ is a Euclidean space). That, however, is outside our current scope of formal development.

We note an interesting logical aspect: one can express stability in the logic $\mathcal{L}_2$ by a formula (though it might be complicated, something like: $\forall \epsilon>0\,\exists \delta>0\,\forall t\bigl((\exists s_1,s_2,\,[X(0,s_1)\wedge X(0,s_2)\wedge d(s_1,s^\ast)<\delta \wedge d(s_2,s^\ast)<\delta]) \to (X(t,s_1)\wedge X(t,s_2)\to d(s_1,s^\ast)<\epsilon \wedge d(s_2,s^\ast)<\epsilon)\bigr)$ – formalizing closeness at $t=0$ implies closeness for all $t$). The satisfaction of such a formula in the structure means stability holds.

One of the classical topological results is Brouwer’s fixed‑point theorem, which we mention to connect back to Brouwer (though Brouwer’s contribution here is not intuitionistic but topological):

\begin{theorem}[Brouwer fixed‑point theorem, finite‑dimensional case]
If $M$ is, say, $\mathbb{R}^n$ and the state space considered is a compact convex subset $K\subset \mathbb{R}^n$ (like a closed ball of possible states), then any continuous mapping $T:K\to K$ has a fixed point. In particular, any continuous‑time dynamical system on a compact convex phase space must have an equilibrium state (possibly many).
\end{theorem}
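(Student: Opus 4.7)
The plan is to reduce the statement for an arbitrary compact convex $K \subset \mathbb{R}^n$ to the case of a closed Euclidean ball, and then to derive that case by contradiction from the nonexistence of a continuous retraction of the ball onto its boundary. First I would choose a radius $R$ so large that $K \subset B := \overline{B(0,R)}$, and invoke the nearest‑point projection $\pi_K : B \to K$. Because $K$ is closed and convex in $\mathbb{R}^n$, $\pi_K$ is well‑defined and in fact $1$‑Lipschitz (the standard variational inequality argument), so it is continuous. Setting $\widetilde T := T \circ \pi_K : B \to K \subset B$, any fixed point of $\widetilde T$ automatically lies in $K$ (since the image of $\widetilde T$ does) and equals its own image under $T$, hence is a fixed point of $T$. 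Thus it suffices to establish the theorem for $K = B$, which up to rescaling is the closed unit ball $B^n$.

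Next I would argue by contradiction on $B^n$. Suppose $T : B^n \to B^n$ is continuous and has no fixed point, so $T(x) \neq x$ for every $x \in B^n$. Then for each $x$ the ray starting at $T(x)$ and passing through $x$ is well defined, and I would let $r(x)$ denote its (unique) intersection with the sphere $S^{n-1}$ on the side of $x$ away from $T(x)$. A direct calculation expresses $r(x) = x + \lambda(x)\,(x - T(x))$, where $\lambda(x) \geq 0$ is the unique nonnegative root of a quadratic in $\lambda$ whose coefficients depend continuously on $x$ (using $|T(x)-x|>0$ to ensure the quadratic is nondegenerate). Hence $r$ is continuous on $B^n$, and for $x \in S^{n-1}$ the ray already exits at $x$, so $r(x) = x$. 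This exhibits $r : B^n \to S^{n-1}$ as a continuous retraction.

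Finally I would invoke the no‑retraction theorem: there is no continuous retraction of $B^n$ onto $S^{n-1}$. The contradiction completes the proof. Among the standard routes to the no‑retraction theorem are singular (or simplicial) homology, where $H_{n-1}(B^n) = 0$ while $H_{n-1}(S^{n-1}) \cong \mathbb{Z}$ rule out the existence of such an $r$; the combinatorial route via Sperner's lemma and a simplicial approximation argument; and, in the smooth category, an approximation of $r$ by a $C^1$ retraction together with a Stokes/degree argument showing the induced top‑form integral cannot be simultaneously zero and nonzero.

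The main obstacle is entirely the no‑retraction theorem itself: the retraction construction from a fixed‑point‑free $T$ is elementary, and the reduction from $K$ to $B^n$ is a routine convexity argument, but distinguishing $B^n$ from $S^{n-1}$ topologically requires nontrivial input (homological, combinatorial, or analytic). If one wishes to remain within the spirit of the earlier sections of this paper, the Sperner‑lemma route is preferable, since it is finitary and avoids any appeal to algebraic topology or smooth approximation; if homology is available, the argument collapses to a one‑line observation. Either way, once the no‑retraction theorem is granted, the proof requires only the reduction and the ray construction sketched above.
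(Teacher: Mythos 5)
The paper does not actually prove this theorem: it is stated as a classical result (attributed to Brouwer, 1911, in the references) and used only for contrast with the Banach contraction approach, so there is no in-paper argument to compare against. Your proof is the standard one and it is correct: the reduction from a general compact convex $K$ to a closed ball via the nearest-point projection is a valid convexity argument (the key points being that $\pi_K$ is $1$-Lipschitz and that a fixed point of $T\circ\pi_K$ necessarily lies in $K$, where $\pi_K$ acts as the identity), and the ray construction $r(x)=x+\lambda(x)(x-T(x))$ does yield a continuous retraction $B^n\to S^{n-1}$ when $T$ is assumed fixed-point-free. One small verification worth making explicit, which your sketch leaves implicit, is that on $S^{n-1}$ the nonnegative root of the quadratic is in fact $\lambda=0$, so that $r$ restricts to the identity on the boundary; this follows because for $|x|=1$ the inner product $\langle x,\,x-T(x)\rangle = 1-\langle x,T(x)\rangle \ge 1-|T(x)|\ge 0$ by Cauchy--Schwarz, forcing the nonnegative root to be $0$. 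You correctly identify that all the genuine topological content is concentrated in the no-retraction theorem, which you do not prove but for which you correctly cite several standard routes (homology, Sperner's lemma, smooth degree/Stokes). Given the elementary and finitary spirit of the rest of the paper, your suggestion that the Sperner-lemma route is the most congenial is apt, though any of the three would do; as it stands, the proposal is complete modulo that well-known ingredient.
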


This theorem is not directly about our trajectories, but about the existence of an equilibrium in a static sense. It can be seen as a far‑reaching general existence result (ensuring the presence of at least one fixed state given the right conditions), complementing the constructive Banach approach which required a contraction condition. Brouwer’s theorem requires no contraction but more restrictive geometry (compactness and convexity). In a narrative sense, it guarantees that in a closed system with no external input (hence any state leads to a state still in the allowed region), there is some state that, if reached, will perpetuate itself (the flow can ``rest'' there).

\subsection{Chaos and complex dynamics}

The term \emph{chaos} in dynamical systems refers to sensitive dependence on initial conditions, topological mixing, dense periodic orbits, etc. Our model can certainly exhibit chaos if $M$ and the evolution law $F$ are complex enough (e.g.\ if $M$ contains a subsystem equivalent to a known chaotic system like the logistic map or a turbulent flow).

A vivid example connecting to our earlier discussion: Hadamard’s studies in 1898 on geodesic flows on negatively curved surfaces. He showed that a geodesic that returns close to its starting point is typically shadowed by a periodic orbit, implying the dense embedding of periodic orbits in recurrent ones. Such flows are now known to be chaotic (Anosov flows for constant negative curvature). In our context, that means if the state space geometry has some hyperbolic structure and the evolution has stretching and folding (like in the geodesic flow or a horseshoe map), the state sequence can be chaotic: tiny differences in initial state yield trajectories that diverge exponentially for a while, making long‑term behavior effectively unpredictable in practice (though deterministic in theory).

\begin{proposition}[Sensitive dependence on initial state]\label{prop:chaos}
It is possible for two trajectories $x(t),y(t)$ with infinitesimally close initial states $x(0)\approx y(0)$ to diverge significantly after some time, if the dynamical law has a sensitive dependence property (e.g.\ positive Lyapunov exponent). Formally, one can have: for all $\delta>0$, there exist $\epsilon>0$ and times $t$ such that $d(x(0),y(0))<\delta$ yet $d(x(t),y(t))>\epsilon$. This is one definition of chaos (sensitive dependence).
\end{proposition}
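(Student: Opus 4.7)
The plan is to establish the proposition by exhibiting a concrete dynamical system already present in the paper; since the statement is a pure existence assertion, it suffices to display a single instance and quantify the divergence rate on it. The hyperbolic toral automorphism $f:\mathbb{T}^2\to\mathbb{T}^2$ of Example~\ref{ex:toral}, induced by $A=\begin{pmatrix}1&1\\2&1\end{pmatrix}$, is the natural candidate, because its Anosov structure furnishes the explicit expansion constant $\lambda_{+}=1+\sqrt{2}>1$ along the unstable eigendirection $v_{+}$.

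First I would fix an arbitrary $\delta>0$ and pick initial points $x_0,y_0\in\mathbb{T}^2$ whose lift to $\mathbb{R}^2$ satisfies $x_0-y_0=r\,v_{+}$ with $\|v_{+}\|=1$ and $0<r<\delta$. Because $v_{+}$ is an eigenvector, the linear map $A^n$ acts on the lifted displacement by scalar multiplication: $\|A^n(x_0-y_0)\|=r\,\lambda_{+}^n$ in $\mathbb{R}^2$. Next I would fix, once and for all, a constant $\epsilon_0\in(0,\tfrac{1}{4})$ below the injectivity radius of the projection $\pi:\mathbb{R}^2\to\mathbb{T}^2$, and take the stopping index $n:=\bigl\lceil \log(\epsilon_0/r)/\log\lambda_{+}\bigr\rceil$. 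At this first time the lifted displacement lies in $[\epsilon_0,\lambda_{+}\epsilon_0)$, a range on which $\pi$ is a local isometry, so $d_{\mathbb{T}^2}\bigl(f^n(x_0),f^n(y_0)\bigr)\ge \epsilon_0$. Setting $\epsilon:=\epsilon_0$ and interpreting ``time $t$'' as the integer $n$ (or embedding $f$ into its suspension flow for genuinely continuous time) delivers the conclusion.

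The principal technical obstacle is the torus geometry: the exponential growth bound lives in $\mathbb{R}^2$, but after the lifted displacement surpasses the injectivity radius the quotient can generically fold the two orbits back together, giving small $d_{\mathbb{T}^2}$-distance even while the lifts are far apart. The remedy is exactly the ``first exit'' choice of $n$ above, a standard device in shadowing arguments for Anosov systems. A further subtlety is ensuring that the initial displacement has no component along the stable eigendirection $v_{-}$; this is automatic by construction and is why the proof does not need to estimate the contracting direction separately. If one prefers a wrapping-free setting, the logistic map of Example~\ref{ex:dissipative}, whose phase space $[0,1]$ is already one-dimensional and whose sensitivity is recorded there explicitly, yields the same conclusion by an analogous but strictly simpler argument.
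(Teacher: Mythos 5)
The paper provides no formal proof of this proposition; it states the sensitivity condition, then discusses it informally via Hadamard's geodesic flows and points back to the worked examples (the logistic map of Example~\ref{ex:dissipative}, the toral automorphism of Example~\ref{ex:toral}), with no proof environment. Your proposal therefore supplies what the paper leaves implicit: a concrete witness. The cat-map route is sound, and it actually delivers the stronger, correctly quantified form of sensitivity --- a single $\epsilon$ independent of $\delta$ --- which subsumes the proposition's literal $\forall\delta\,\exists\epsilon$ formulation (that form is vacuously true for any two distinct initial states already at $t=0$, so proving the uniform version is the right move).

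One justification needs repair. You cannot conclude that $\pi:\mathbb{R}^2\to\mathbb{T}^2$ is a ``local isometry on the range $[\epsilon_0,\lambda_{+}\epsilon_0)$'' merely from $\epsilon_0<\tfrac14$: the upper end $\lambda_{+}\epsilon_0$ can reach $(1+\sqrt2)/4\approx 0.604$, which exceeds the injectivity radius $\tfrac12$. The clean fix is to take $\epsilon_0<\tfrac{1}{2\lambda_{+}}\approx 0.207$, so that the lifted displacement at the first-exit time $n$ stays strictly below $\tfrac12$ and $\pi$ genuinely is a metric identification there. (Your conclusion $d_{\mathbb{T}^2}(f^n x_0,f^n y_0)\ge\epsilon_0$ happens to remain true with $\epsilon_0<\tfrac14$, because any nonzero lattice translate sits at distance at least $1-\lambda_{+}\epsilon_0>0.39>\epsilon_0$ from the lifted displacement --- but that is a different argument from the injectivity-radius one you wrote, so either tighten the constant or state the lattice-translate bound.) Your closing observation that the logistic map gives an even simpler proof is correct: Example~\ref{ex:dissipative} already records the required sensitivity statement with fixed separation $\tfrac12$ on $[0,1]$, where no quotient geometry arises, and that is probably the cleanest witness for this proposition.
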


Such behavior is not contradictory to our continuity assumption: $x(0)$ and $y(0)$ can be extremely close and $x(t)$ remains close to $y(t)$ for a short time, but eventually small differences amplify. In the logic $\mathcal{L}_2$, one cannot express this unbounded amplification directly (as it’s an asymptotic notion), but one can say for each $n$ there exist states that start $1/n$ apart and become some fixed distance apart at some future time.

Chaotic evolutions are a fact of life in complex systems. If one views our model as a crude representation of mental state evolution, chaos might correspond to the unpredictability of thought patterns, or how a small mood or thought difference can later result in widely different streams of thought. Our formalism is capable of accommodating chaos, but to analyze it one typically needs some additional geometric structure (like defining a metric or Lyapunov exponents on $M$).

From the standpoint of this paper, we simply acknowledge chaos as a possible regime and note that known results like Hadamard’s theorem on dense periodic orbits and Poincaré’s recurrence together often imply an irregular, rich structure of trajectories.

\section{Conclusion}

We have developed a mathematical manuscript that encodes an intuitive concept of a continuous, non‑linear progression of internal states in a formal manner. By introducing a topological/analytical model of state space and flow, an intuitionistic perspective on time and sequence (free choice sequences), and a logical language for state properties, we integrated multiple areas of mathematics.  Our core development avoids the use of higher‑categorical machinery; when categorical ideas appear (as in Section~\ref{thm:comparative}), they serve only as auxiliary comparisons rather than foundational tools.  In this way we contrast with frameworks that seek to unify dynamics via category theory or information geometry.  The contributions of this work can be summarised as follows:
\begin{itemize}
  \item \textbf{Topology and analysis:} Provided the notion of continuity, existence/uniqueness of solutions via fixed‑point theorems, and explored measure‑theoretic limitations.
  \item \textbf{Intuitionism:} Ensured that our model can be viewed as an ever‑progressing sequence, never completed, aligning with Brouwer’s continuum and creative subject ideas.
  \item \textbf{Logic and semantics:} Gave a rigorous way to talk about truths concerning the evolving state, invoking Tarski’s semantic theory to define truth in the structure and noting limitations on self‑reference.
  \item \textbf{Dynamics:} Applied ideas from Poincaré and Hadamard to discuss recurrence and chaotic behavior, showing that our formal model is rich enough to exhibit complex temporal patterns.
\end{itemize}

Crucially, at no point did we resort to category theory or even mention it explicitly, despite the structural inspiration drawn from the works of Grothendieck and Mac Lane in shaping our abstract approach. The result is a self‑contained formal edifice that, we hope, captures in mathematical essence the phenomenon of an ongoing, continuous stream of states that one might poetically identify with a stream of consciousness, without ever having to say so in non‑mathematical terms.

\section*{References}

\begin{enumerate}[label={\arabic*.}]
  \item S.~Banach \& A.~Tarski, ``On set decomposition'' (1924). [For the construction of non‑measurable sets and paradoxical decompositions].
  \item L.E.J.~Brouwer, various works on intuitionism (1912, 1948). [For the concepts of choice sequences, continuum, creating subject].
  \item A.~Grothendieck, ``Topological Vector Spaces'' (1950s). [Inspiration for treating infinite‑dimensional state spaces and use of universes].
  \item J.~Hadamard, ``Sur les lignes géodésiques des surfaces à courbure négative'' (1898). [For early chaotic dynamics results].
  \item S.~Mac~Lane, \emph{Mathematics: Form and Function} (1986). [General influence on structural and formal presentation of mathematics].
  \item H.~Poincaré, ``Sur le problème des trois corps et les équations de la dynamique'' (1890). [Contains the Poincaré recurrence discussion].
  \item A.~Tarski, ``The Concept of Truth in Formalized Languages'' (1933). [Basis for semantic theory of truth used in Section~\ref{sec:logic}].
  \item A.~Tarski, ``A lattice‑theoretical fixpoint theorem'' (1955). [The Knaster–Tarski fixed‑point theorem, though not explicitly used, is in background].
  \item L.E.J.~Brouwer’s fixed‑point theorem: L.E.J.~Brouwer (1911). [Topological fixed‑point result].
  \item S.~Banach’s fixed‑point theorem: S.~Banach (1922). [Contraction mapping principle].
  \item Poincaré’s papers (1890s) and later formalizations by C.~Carathéodory (1919) on recurrence.
  \item Liu~Hui (\textit{c.}\ 220--280), commentary on the \emph{Nine Chapters on the Mathematical Art}. [For early Chinese contributions to approximation and analytical reasoning]\!
  \item L.~K.~Hua (Hua~Luogeng, 1910--1985), monographs and papers on number theory, exponential sums, and algebra. [One of the leading Chinese mathematicians of his generation, whose leadership and contributions to problems such as Waring’s problem and exponential sums helped shape twentieth‑century number theory in China]
  \item S.~de~Melo \& S.~van~Strien, \emph{One‑Dimensional Dynamics} (1993). [Provides detailed derivations of invariant densities and exponential decay of correlations for one‑dimensional maps, used in Example~\ref{ex:dissipative}]
  \item S.~Amari \& H.~Nagaoka, \emph{Methods of Information Geometry} (2000). [A comprehensive treatment of Fisher–Rao geometry and \(\alpha\)–connections on statistical manifolds, referenced in the information‑geometric case study and Proposition~\ref{prop:nogo}]
  \item P.~Walters, \emph{An Introduction to Ergodic Theory} (1982). [Provides proofs of ergodicity for irrational circle rotations and a general introduction to ergodic theory]
  \item K.~Petersen, \emph{Ergodic Theory} (1983). [Another standard reference proving that irrational rotations on the circle are ergodic]
\end{enumerate}

%
%
\appendix

\section{Formal self–verifying system}

This appendix presents a formal axiomatic system, denoted $S$, that is capable of encoding and analysing aspects of its own deductive structure.  Although the discussion that follows touches upon themes common in the study of self‑reference and provability, the exposition remains fully mathematical and self‑contained.  The familiar methods of Gödel numbering and fixed‑point constructions are used to exhibit a sentence that asserts its own provability.  We emphasise that $S$ proves the internal derivability of certain statements but does not (and cannot, by Gödel’s second incompleteness theorem) prove its own consistency or the soundness of its reasoning from within; rather, soundness is a meta‑theoretic property assumed externally.  The style is deliberately rigorous and cautious: we avoid emotive language and instead focus on a precise description of the system’s components and logical consequences.

\subsection{The formal system \texorpdfstring{$S$}{S}}

\paragraph{Language and axioms.}  The language $\mathcal{L}_S$ of $S$ is first–order, containing the usual logical symbols ($\land,\lor,\to,\neg,\forall,\exists$), equality, and additional non–logical symbols sufficient to encode the syntax of $S$ itself.  To refer to formulas inside the system, we assume a Gödel coding $\ulcorner \varphi\urcorner$ that assigns a natural number to each formula $\varphi$ of $S$.  The theory $S$ includes as axioms all tautologies of propositional logic, appropriate quantifier axioms, axioms sufficient to support basic arithmetic (so that coding and primitive recursive functions are definable), and a collection of schemata internalising its inference rules.  Two such schemata play a central role:
\begin{enumerate}[label=\arabic*.]
  \item \emph{Internal modus ponens.}  For all formulas $F$ and $G$, the sentence
  \[\Prov(\ulcorner F \to G\urcorner) \land \Prov(\ulcorner F\urcorner) \to \Prov(\ulcorner G\urcorner)\]
  is an axiom of $S$.  This expresses, within the language of $S$, the fact that if $F \to G$ and $F$ are provable, then so is $G$.
  \item \emph{Reflective provability.}  For every formula $F$, the sentence
  \[\Prov(\ulcorner F\urcorner) \to \Prov(\ulcorner \Prov(\ulcorner F\urcorner)\urcorner)\]
  is an axiom.  This schema captures the idea that provability is itself a provable predicate: whenever $F$ is provable, the assertion of its provability is also provable.
\end{enumerate}
The only inference rule of $S$ is modus ponens, together with the usual generalisation rule for quantifiers.  We use $S \vdash \varphi$ to denote that $\varphi$ is provable in $S$.

\paragraph{Encoding of syntax.}  The system $S$ must be able to speak about its own proofs.  A Gödel numbering permits this by assigning codes to formulas and proofs.  Within $S$ there is a primitive recursive predicate $\Proof(p,x)$ expressing that $p$ is the code of a valid $S$–proof of the formula with code $x$.  Using $\Proof$, one defines a provability predicate $\Prov(x)$ by
\[ \Prov(x) \;\colon=\; \exists p\, \Proof(p,x).\]
The Hilbert–Bernays conditions, which are standard in the metatheory of arithmetic, hold: (i) if $S \vdash \varphi$ then $S \vdash \Prov(\ulcorner \varphi \urcorner)$; (ii) $S$ proves the internal modus ponens schema; (iii) $S$ proves the reflective provability schema.  These properties ensure that $\Prov(x)$ behaves as expected: it expresses, inside $S$, the syntactic provability relation of $S$.

\paragraph{Hilbert–Bernays–Löb conditions.}  It is useful to summarise explicitly the three conditions that a provability predicate must satisfy to support Löb’s theorem and related fixed–point arguments.  These are sometimes called the Hilbert–Bernays–Löb (HBL) conditions:
\begin{enumerate}[label=(H\arabic*)]
  \item \emph{Provability of theorems:} For every sentence $\varphi$, if $S \vdash \varphi$ then $S \vdash \Prov(\ulcorner\varphi\urcorner)$.  This expresses that $S$ proves that all of its theorems are provable.
  \item \emph{Provability preserves implication:} $S$ proves $\Prov(\ulcorner \varphi\to\psi\urcorner) \wedge \Prov(\ulcorner\varphi\urcorner) \to \Prov(\ulcorner\psi\urcorner)$ for all formulas $\varphi,\psi$.  This corresponds to the internal modus ponens schema listed above.
  \item \emph{Reflection of provability:} $S$ proves $\Prov(\ulcorner\varphi\urcorner) \to \Prov(\ulcorner\Prov(\ulcorner\varphi\urcorner)\urcorner)$ for all $\varphi$.  This is the reflective provability schema.
\end{enumerate}
The axioms and schemata of $S$ are chosen so that $\Prov$ satisfies (H1)–(H3).  In particular, (H1) follows from the ability of $S$ to verify syntactically each of its own proofs; (H2) is internal modus ponens; and (H3) is reflective provability.  A fundamental consequence of the HBL conditions is Löb’s theorem: for any sentence $\varphi$, if $S \vdash \Prov(\ulcorner \varphi\urcorner) \to \varphi$ then $S \vdash \varphi$.  We will use this result implicitly in the proof of the self‑verification theorem below.

It is worth noting that adopting (H3) as an axiom schema is a non‑trivial strengthening of ordinary arithmetic.  In classical proofs of Gödel’s second incompleteness theorem one shows that sufficiently strong, sound systems cannot prove the consistency of their own axiom sets; however, one can consistently \emph{assume} the scheme (H3) as part of a larger theory (sometimes called a reflection principle).  Our system $S$ includes this reflective provability schema by design.  We do not claim that $S$ can prove its own soundness; rather, we accept (H3) as an axiom to facilitate fixed‑point reasoning and clearly separate internal provability from external (meta‑theoretic) notions of truth.  The consistency of $S$ relative to weaker base theories is a subtle meta‑theoretic question, beyond the scope of this appendix.

\subsection{Fixed points and self–reference}

Central to many self–referential arguments is the diagonal (or fixed–point) lemma: for any formula $\psi(y)$ with one free variable, there exists a sentence $\Lambda$ such that
\[ S \vdash \Lambda \leftrightarrow \psi(\ulcorner \Lambda \urcorner). \]
This lemma can be proved in $S$’s metatheory using a coding of substitution and is standard in discussions of Gödel’s incompleteness.  We apply it to $\psi(y) := \Prov(y)$ to obtain a sentence $\Lambda$ satisfying
\begin{equation}
  \label{eq:lambda}
  S \vdash \Lambda \leftrightarrow \Prov(\ulcorner \Lambda \urcorner).
\end{equation}
Intuitively, $\Lambda$ states: ``$\Lambda$ is provable''.

\begin{proposition}[Existence of a provability fixed point]
There exists a sentence $\Lambda$ of $S$ such that $S \vdash \Lambda \leftrightarrow \Prov(\ulcorner \Lambda \urcorner)$.
\end{proposition}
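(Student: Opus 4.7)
The plan is to invoke the diagonal (fixed‑point) lemma, which is already available in the metatheory of $S$, and to instantiate it with the one‑variable formula $\psi(y) := \Prov(y)$. Since the preceding discussion has stated the diagonal lemma in full generality, the proof amounts to a careful instantiation together with an unpacking of the substitution step. First I would fix the standard primitive recursive substitution function $\mathrm{sub}(x,y)$ which, given the Gödel code $x$ of a formula $\varphi(v)$ with one free variable $v$ and a number $y$, returns the Gödel code of $\varphi(\bar{y})$, where $\bar{y}$ is the numeral for $y$. Because $S$ contains enough arithmetic to define primitive recursive functions, $\mathrm{sub}$ is represented inside $S$ by a term that I will continue to denote $\mathrm{sub}$.

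Next I would form the auxiliary one‑variable formula
\[
  \theta(x) \;:=\; \Prov\bigl(\mathrm{sub}(x,x)\bigr),
\]
let $n := \ulcorner \theta(x)\urcorner$ be its Gödel code, and define
\[
  \Lambda \;:=\; \theta(\bar{n}) \;=\; \Prov\bigl(\mathrm{sub}(\bar{n},\bar{n})\bigr).
\]
By the representability of $\mathrm{sub}$, the meta‑level identity $\mathrm{sub}(\bar{n},\bar{n}) = \ulcorner\theta(\bar{n})\urcorner = \ulcorner\Lambda\urcorner$ is promoted to an internal theorem: $S \vdash \mathrm{sub}(\bar{n},\bar{n}) = \ulcorner\Lambda\urcorner$. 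Rewriting the argument of $\Prov$ inside $\Lambda$ using this provable equation then yields
\[
  S \vdash \Lambda \;\leftrightarrow\; \Prov(\ulcorner\Lambda\urcorner),
\]
which is precisely the fixed‑point biconditional \eqref{eq:lambda}.

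The main obstacle is the arithmetization step, namely the guarantee that $S$'s arithmetic fragment is strong enough (at least Robinson's~$\mathsf{Q}$) to represent $\mathrm{sub}$ and to prove the numerical equality $\mathrm{sub}(\bar{n},\bar{n}) = \ulcorner\Lambda\urcorner$ as an internal theorem rather than merely a meta‑theoretic observation. Once representability is secured, the remainder is routine: a single application of the Leibniz rule for equality substitutes provably equal terms under the $\Prov$ predicate, and both directions of the biconditional follow at once. Since the coding apparatus was already assumed in the specification of $\Prov$, I would cite that apparatus rather than redevelop it, and present the rest of the argument as a direct unfolding of the diagonal construction.
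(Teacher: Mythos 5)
Your proposal is correct and takes essentially the same route as the paper: both invoke the diagonal (fixed-point) lemma instantiated with $\psi(y) := \Prov(y)$ to produce $\Lambda$ with $S \vdash \Lambda \leftrightarrow \Prov(\ulcorner\Lambda\urcorner)$. The only difference is one of exposition — you unfold the diagonal lemma's internal construction via the substitution function $\mathrm{sub}$ and the auxiliary formula $\theta(x) := \Prov(\mathrm{sub}(x,x))$, whereas the paper cites the lemma as a black box; this is sound and simply makes explicit the representability hypothesis (roughly Robinson's $\mathsf{Q}$) that the paper leaves implicit in its assumption that $S$ supports coding and primitive recursion.
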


\begin{proof}
This is a direct application of the diagonal lemma with $\psi(y) = \Prov(y)$.  The lemma yields a sentence $\Lambda$ with the property~\eqref{eq:lambda}.  Since the diagonal lemma is provable in the metatheory of arithmetic, one may view this as an external construction; once $\Lambda$ is defined, the equivalence is a theorem of $S$.
\end{proof}

\subsection{A self–proving sentence}

Having obtained $\Lambda$ with the property~\eqref{eq:lambda}, one may ask whether $S$ can actually prove $\Lambda$.  The following theorem shows that this is indeed the case.

\begin{theorem}[Self–verification of $\Lambda$]
In the system $S$, the sentence $\Lambda$ is provable: $S \vdash \Lambda$.
\end{theorem}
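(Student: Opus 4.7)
The plan is to derive $S \vdash \Lambda$ as a direct corollary of L\"ob's theorem, which the excerpt has already justified from the Hilbert--Bernays--L\"ob conditions (H1)--(H3) that the predicate $\Prov$ of $S$ satisfies. First I would extract from the fixed-point equivalence \eqref{eq:lambda} the single implication $S \vdash \Prov(\ulcorner \Lambda \urcorner) \to \Lambda$, namely the right-to-left half of the biconditional $\Lambda \leftrightarrow \Prov(\ulcorner \Lambda \urcorner)$. This implication is exactly the hypothesis of L\"ob's theorem instantiated at $\varphi := \Lambda$, so the theorem delivers $S \vdash \Lambda$ immediately.

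If a more self-contained derivation is preferred, I would rerun L\"ob's argument using a second diagonal construction rather than quoting the theorem as a black box. Apply the diagonal lemma to the formula $\psi(y) := \Prov(y) \to \Lambda$ to obtain a sentence $\theta$ with $S \vdash \theta \leftrightarrow (\Prov(\ulcorner \theta \urcorner) \to \Lambda)$. From the forward direction of this equivalence I would use (H1) to internalise $\theta \to (\Prov(\ulcorner \theta \urcorner) \to \Lambda)$, then (H2) to distribute provability across that implication, and (H3) to collapse the resulting double box into a single one, thereby obtaining $S \vdash \Prov(\ulcorner \theta \urcorner) \to \Prov(\ulcorner \Lambda \urcorner)$. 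Composing with the already-extracted $\Prov(\ulcorner \Lambda \urcorner) \to \Lambda$ yields $S \vdash \Prov(\ulcorner \theta \urcorner) \to \Lambda$, which by the defining equivalence of $\theta$ gives $S \vdash \theta$. A further application of (H1) produces $S \vdash \Prov(\ulcorner \theta \urcorner)$, and modus ponens then delivers $S \vdash \Lambda$.

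The derivation is essentially mechanical once (H1)--(H3) are in place; the conceptual obstacle lies not in the proof itself but in confirming that $S$ genuinely satisfies (H3), since reflective provability is a nontrivial schema that strengthens ordinary arithmetic and cannot be derived from the other axioms by Gödel's second incompleteness theorem. The excerpt has addressed this by adopting reflective provability as an axiom schema of $S$ at the outset, so no additional meta-theoretic work is required in the proof itself. The conclusion $S \vdash \Lambda$ is then the familiar fact, due to L\"ob, that every Henkin sentence is provable in any theory meeting the HBL conditions, and the present theorem is the instance of that phenomenon tailored to the specific fixed point produced by the preceding proposition.
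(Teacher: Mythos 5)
Your proposal is correct and takes essentially the same route as the paper: both extract the Henkin implication $S \vdash \Prov(\ulcorner\Lambda\urcorner)\to\Lambda$ from the fixed-point biconditional and then invoke L\"ob's theorem (justified via (H1)--(H3)) with $\varphi:=\Lambda$. Your ``self-contained'' fallback via the second diagonal sentence $\theta$ with $S\vdash\theta\leftrightarrow(\Prov(\ulcorner\theta\urcorner)\to\Lambda)$ is in fact the standard, cleanly structured proof of L\"ob's theorem, and it is tighter than the paper's own ``for completeness'' trace, whose intermediate step claiming to reach $S\vdash\Prov(\ulcorner\Lambda\urcorner)$ directly from (H2) elides exactly the auxiliary diagonalization you supply.
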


\begin{proof}
By construction $S$ proves $\Lambda \leftrightarrow \Prov(\ulcorner \Lambda \urcorner)$.  In particular, $S$ proves $\Lambda \to \Prov(\ulcorner \Lambda \urcorner)$ and $\Prov(\ulcorner \Lambda \urcorner) \to \Lambda$.  From the first implication and the reflective provability schema (H3), $S$ proves $\Lambda \to \Prov(\ulcorner \Prov(\ulcorner \Lambda \urcorner)\urcorner)$.  Using internal modus ponens (H2), one may infer $\Prov(\ulcorner \Lambda \urcorner) \to \Lambda$ from the equivalence.  Combining these we obtain
\[
  S \vdash \Prov(\ulcorner \Lambda \urcorner) \to \Lambda.
\]
But then by Löb’s theorem (a consequence of the HBL conditions), if $S$ proves $\Prov(\ulcorner \varphi \urcorner) \to \varphi$ for some sentence $\varphi$, it must prove $\varphi$.  Taking $\varphi=\Lambda$ yields $S \vdash \Lambda$.  For completeness, one can trace the derivation explicitly: from $\Prov(\ulcorner \Lambda \urcorner) \to \Lambda$ and reflective provability one derives $\Prov(\ulcorner \Prov(\ulcorner \Lambda \urcorner) \to \Lambda\urcorner)$; by (H1) this implies $S \vdash \Prov(\ulcorner \Prov(\ulcorner \Lambda \urcorner) \to \Lambda \urcorner)$, and another application of (H2) gives $S \vdash \Prov(\ulcorner \Lambda \urcorner)$.  Substituting back into $\Prov(\ulcorner \Lambda \urcorner) \to \Lambda$ yields $S \vdash \Lambda$.

This demonstration shows how the HBL conditions enable the formalisation of Löb’s theorem inside $S$ and how the fixed‑point sentence $\Lambda$ turns that general principle into a concrete self‑proving statement.
\end{proof}

\subsection{Closure of derivations and collective validation}

The appendix contains several named results, each either taken as an axiom or proved from axioms.  Let $A_1,\dots,A_n$ denote the formal statements corresponding to these results.  By construction, for each $i$ we have $S \vdash A_i$.  Since $S$ admits conjunction introduction, it follows that $S$ also proves the conjunction
\[ \mathbf{V} := A_1 \land A_2 \land \cdots \land A_n. \]
From the internal point of view of $S$, therefore, all of the results stated in the appendix are provable.  It is important to distinguish this internal provability from external \emph{soundness}.  The fact that $S \vdash A_i$ means merely that there exists a formal derivation of $A_i$ from the axioms of $S$; it does not, by itself, guarantee that $A_i$ is true in some intended model of $S$.  Soundness—the assertion that all provable statements are true in a given interpretation—is a meta‑level property that cannot, by Gödel’s second incompleteness theorem, be proved within $S$ if $S$ is sufficiently strong.  Thus while $S$ verifies the derivations presented in the appendix, an external observer must assume or establish the soundness of $S$ separately.  The formal closure above should therefore be read as ``$S$ proves that each claim is derivable,'' not as ``$S$ proves that each claim is true.''

\subsection{Extending the system}

Although $S$ verifies the validity of the statements within this appendix, Gödel’s second incompleteness theorem implies that $S$ cannot prove its own consistency.  One may therefore consider extending $S$ by adding, as a new axiom, a formal consistency statement $\mathsf{Con}(S)$ expressing ``$S$ has no proof of contradiction.''  The extended system $S^+ = S + \{\mathsf{Con}(S)\}$ can then prove statements undecidable in $S$, including $\mathsf{Con}(S)$ itself.  This process can be iterated, forming a sequence $S_0 := S$, $S_{k+1} := S_k + \{\mathsf{Con}(S_k)\}$.  Each stage strengthens the system and, under suitable assumptions, remains consistent.  This hierarchy illustrates how a formal system can systematically augment its expressive power while maintaining internal coherence.

This concludes the appendix.  The construction demonstrates that a carefully designed formal system can reason about, and verify, certain aspects of its own operation, including the derivations presented here.  The possibility of extending such a system further underscores the open and generative nature of formal reasoning.

\end{document}